\newtheorem{theorem}{Theorem}
\newtheorem{lemma}[theorem]{Lemma}
\newtheorem{proposition}[theorem]{Proposition}
\newtheorem{remark}[theorem]{Remark}
\newenvironment{proof}[1][Proof]{\noindent\textbf{#1.} }{\ \rule{0.5em}{0.5em}}
\def\rank{\mathop{\mathrm{rank}}\nolimits}
\def\O{\mathcal{O}}
\def\OP{\mathcal{OP}}
\def\T{\mathcal{T}}
\def\S{\mathcal{S}}
\def\E{\mathcal{E}}
\def\R{\mathbb R}
\title{On relative ranks of finite transformation semigroups with restricted range}
\author{Ilinka Dimitrova \\
\textit{Faculty of Mathematics and Natural Science}\\
\textit{South-West University "Neofit Rilski"}\\
\textit{2700 Blagoevgrad, Bulgaria}\\
\textit{e-mail: ilinka\_dimitrova@swu.bg}\\
~~\\
J\"{o}rg Koppitz \\
\textit{Institute of Mathematics and Informatics}\\
\textit{Bulgarian Academy of Sciences}\\
\textit{1113 Sofia, Bulgaria}\\
\textit{e-mail: koppitz@math.bas.bg}}
\begin{document}

\maketitle

\begin{abstract}
In this paper, we determine the relative rank of the semigroup
$\T(X,Y)$ of all transformations on a finite chain $X$ with restricted range $Y\subseteq X$ modulo the set $\OP(X,Y)$ of all orientation-preserving transformation in $\T(X,Y)$. Moreover, we state the relative rank of the semigroup $\OP(X,Y)$ modulo the set $\O(X,Y)$ of all order-preserving transformations in $\OP(X,Y)$. In both cases we characterize the minimal relative generating sets.
\end{abstract}

\textit{Key words:} transformation semigroups with restricted range, order-preserving
transformations, orientation-preserving transformations, relative rank, relative generating sets.

2010 Mathematics Subject Classification: 20M20
\\

\section{Introduction and Preliminaries}

Let $S$ be a semigroup. The \textit{rank} of $S$ (denoted by $\rank S$) is defined to be the minimal number of elements of a generating set of $S$. The ranks of various well known semigroups have been calculated \cite{GH, GH1, Howie, HMcF}.
For a set $A \subseteq S$, the \textit{relative rank} of $S$ modulo $A$, denoted by $\rank (S : A)$, is the minimal cardinality of a set $B\subseteq S$ such that $A \cup B$ generates $S$. It follows immediately from the definition that $\rank (S : \emptyset) = \rank S$, $\rank ( S : S) = 0$, $\rank (S : A ) = \rank (S : \langle A \rangle )$ and $\rank (S : A) = 0$ if and only if $A$ is a generating set for $S$.
The relative rank of a semigroup modulo a suitable set was first introduced by Ru\v skuc in \cite{Ruskuc} in order to describe the generating sets of semigroups with infinite rank. In \cite{HRH}, Howie, Ru\v skuc, and Higgins  considered the relative ranks of the monoid $\T(X)$ of all full transformations on $X$, where $X$ is an infinite set, modulo some distinguished subsets of $\T(X)$.
They showed that $\rank (\T(X) : \S(X)) = 2$, $\rank (\T(X) : \E(X)) = 2$ and $\rank (\T(X) : J) = 0$, where $\S(X)$ is the symmetric group on $X$, $\E(X)$ is the set of all idempotent transformations on $X$ and $J$ is the top $\mathcal{J}$-class of $\T(X)$, i.e.
$J=\{\alpha\in\T(X) : |X\alpha|=|X|\}$.
But also if the rank is finite, the relative rank gives information about the generating sets. In the present paper, we
will determine the relative rank for a particular semigroup of
transformations on a finite set.

Let $X$ be a finite chain, say $X=\{1<2<\cdots <n\}$, for a natural number $n$.
A transformation $\alpha \in \T(X)$ is called \textit{order-preserving} if $x\le y$ implies $x\alpha\le y\alpha$, for all $x,y\in X$.
We denote by $\O(X)$ the submonoid of $\T(X)$ of all order-preserving full transformations on $X$.
The relative rank of $\T(X)$ modulo $\O(X)$ was considered by Higgins, Mitchell, and Ru\v{s}kuc in \cite{HMR}.
They showed that $\rank(\T(X) : \O(X)) = 1$, when $X$ is an arbitrary countable chain or an arbitrary well-ordered set,
while $\rank(\T(\R) : \O(\R))$ is uncountable, by considering the usual order of the set $\R$ of real numbers.
In \cite{DFK}, Dimitrova, Fernandes, and Koppitz studied the relative rank of the semigroup $\O(X)$ modulo $J=\{\alpha\in\O(X) : |X\alpha|=|X|\}$,
for an infinite countable chain $X$.
We say that a transformation $\alpha \in\T(X)$ is \textit{orientation-preserving} if there
are subsets $X_{1},X_{2}\subseteq X$ with $\emptyset \neq X_{1}<X_{2}$, (i.e.
$x_{1}<x_{2}$ for $x_{1}\in X_{1}$ and $x_{2}\in X_{2}$), $X=X_{1}\cup X_{2}$,
and $x\alpha \leq y\alpha$, whenever either $(x,y)\in X_{1}^{2}\cup X_{2}^{2}$ with $x\leq y$ or $(x,y)\in X_{2}\times X_{1}$.
Note that $X_{2}=\emptyset$ provides $\alpha \in \O(X)$.
We denote by $\OP(X)$ the submonoid of $\T(X)$ of all orientation-preserving full transformations on $X$. An equivalent notion of an orientation-preserving transformation was first introduced by McAlister in \cite{McA} and, independently, by Catarino and Higgins in \cite{CH}. It is clear that $\O(X)$ is a submonoid of $\OP(X)$, i.e. $\O(X)\subset \OP(X)\subset \T(X)$. It is interesting to note that the relative rank of $\T(X)$ modulo $\OP(X)$ as well as the relative rank of $\OP(X)$ modulo $\O(X)$ is one (see \cite{CH, HRH}), but the situation will change if one considers transformations with restricted range.

Let $Y=\{a_{1}<a_{2}<\cdots <a_{m}\}$ be a nonempty subset of $X$, for a natural number $m\leq n$, and denote by $\T(X,Y)$ the subsemigroup $\{\alpha \in \T(X) : X \alpha \subseteq Y\}$ of $\T(X)$ of all transformations with range (image) restricted to $Y$.
The set $\T(X,Y)$ coincides with $\T(X)$, whenever $Y=X$ (i.e. $m=n$).
In 1975, Symons \cite{Symons} introduced and studied the semigroup $\T(X,Y)$, which is called
semigroup of transformations with restricted range.
Recently, the rank of $\T(X,Y)$ was computed by Fernandes and Sanwong
in \cite{FS}. They proved that the rank of $\T(X,Y)$ is the Sterling number $S(n,m)$ of second kind with $\left\vert X\right\vert =n$
and $\left\vert Y\right\vert =m$.
The rank of the order-preserving counterpart $\O(X,Y)$ of $\T(X,Y)$ was studied in \cite{FHQS1} by Fernandes, Honyam, Quinteiro, and Singha.
The authors found that
$\rank\O(X,Y)=\left(
\begin{array}{c}
n-1 \\
m-1%
\end{array}%
\right) +\left\vert Y^{\#}\right\vert,$
where $Y^{\#}$ denotes the set of all
$y\in Y$ with one of the following properties:
(i) $y$ has no successor in $X$;
(ii) $y$ is no successor of any element in $X$;
(iii) both the successor of $Y$ and the element whose successor is $y$ belong to $Y$.
Moreover, the regularity and the rank of the semigroup $\OP(X,Y)$ were studied by the same authors in \cite{FHQS2}.
They showed that
$\rank\OP(X,Y)=\left(
\begin{array}{c}
n \\
m
\end{array}%
\right)$.
In \cite{TK}, Tinpun and Koppitz studied the relative rank of $\T(X,Y)$ modulo $\O(X,Y)$ and proved that
$\rank(\T(X,Y):\O(X,Y))=
S(n,m)-
\left(
\begin{array}{c}
n-1 \\
m-1
\end{array}
\right) + a$,
where $a \in \{0,1\}$ depending on the set $Y$.
In this paper, we determine the relative rank of $\OP(X,Y)$ modulo $\O(X,Y)$ as well as the relative
rank of $\T(X,Y)$ modulo $\OP(X,Y)$.

Let $\alpha \in \T(X,Y)$. The kernel of $\alpha$ is the
equivalence relation $\ker\alpha $ with $(x,y)\in \ker\alpha $ if $x\alpha
=y\alpha$. It corresponds uniquely to a partition on $X$. This justifies to regard $\ker \alpha $ as a partition on $X$. We will call a
block of this partition as $\ker\alpha$-class. In particular, the sets $x\alpha^{-1}=\{y\in X:y\alpha =x\}$, for $x\in X\alpha$, are the $\ker\alpha$-classes.
We say that a partition $P$ is a subpartition of a
partition $Q$ of $X$ if for all $p\in P$ there is $q\in Q$ with
$p\subseteq q$. A set $T\subseteq X$ with $\left\vert T\cap x\alpha^{-1}\right\vert =1$, for all $x\in X\alpha$, is called a transversal of $\ker\alpha$. Let $A\subseteq X$. Then $\alpha |_{A}:A\rightarrow Y$ denotes the restriction of $\alpha$ to $A$ and $A$ will be called convex if $x<y<z$ with $x,z\in A$ implies $y\in A$.

Let $l\in \{1,\ldots ,m\}$. We denote by $\mathcal{P}_{l}$ the set of all
partitions $\{A_{1},\ldots,A_{l}\}$ of $X$ such that $A_{2}<A_{3}<\cdots
<A_{l}$ are convex sets (if $l>1$) and $A_{1}$ is the union of two convex
sets with $1,n\in A_{1}.$
Further, let $\mathcal{Q}_{l}$ be the set of all partitions $\{A_{1},\ldots,A_{l}\}$
of $X$ such that $A_{1}<A_{2}<\cdots <A_{l}$ are convex and let $\mathcal{R}_{l}$
be the set of all partitions of $X$, which not belong to $\mathcal{Q}_{l} \cup \mathcal{P}_{l}$. We observe that $\ker \beta \in \mathcal{Q}_{l} \cup \mathcal{P}_{l}$, whenever $\beta\in \OP(X,Y)$ with $|X\beta|=l$. In particular, $\ker \beta \in \mathcal{Q}_{l}$, whenever $\beta \in \O(X,Y)$.

Let us consider the case $l=m>1$. For $P\in \mathcal{P}_{m}$ with the blocks $A_{1}$,$A_{2}<\cdots <A_{m}$,
let $\alpha _{P}$ be the transformation on $X$ defined by
$$x\alpha_{P}:=a_{i}, \text{ whenever } x\in A_{i} \text{ for } 1\leq i\leq m$$
in the case $1\notin Y$ or $n\notin Y$ and
$$x\alpha_{P}:=\left\{\begin{array}{ll}
                 a_{i+1}, & \text{if } x\in A_{i} \text{ for } 1\leq i<m \\
                 a_{1} & \text{if } x\in A_{m}
               \end{array}\right.$$
in the case $1,n\in Y$.
Clearly, $\ker \alpha _{P}=P$. With $X_{1}=\{1,\ldots ,\max A_{m}\}$, $X_{2}=\{\max A_{m}+1,\ldots ,n\}$ in the case $1\notin Y$ or $n\notin Y$ and $X_{1}=\{1,\ldots ,\max A_{m-1}\}$, $X_{2}=\{\max A_{m-1}+1,\ldots ,n\}$ in the case $1,n\in Y$, where $\max A_{m}$ ($\max A_{m-1}$) denotes the greatest element in the set $A_{m}$ ($A_{m-1}$, respectively), we can easy verify that $\alpha _{P}$ is orientation-preserving.

Further, let $\eta \in \T(X,Y)$ be defined by
\[
x\eta :=\left\{
\begin{array}{ll}
a_{i+1} & \mbox{if }~~ a_{i}\leq x<a_{i+1}\mbox{ for }1\leq i<m \\
a_{1} & \mbox{if }~~ x=a_{m} \\
a_{\Gamma} & \mbox{otherwise}
\end{array}%
\right. ~~\mbox{ with }~~ \Gamma :=\left\{
\begin{array}{ll}
1 & \mbox{if }~~ 1\notin Y \\
2 & \mbox{otherwise,}
\end{array}
\right.
\]
in the case $1\notin Y$ or $n\notin Y$ and
\[
x\eta :=\left\{
\begin{array}{lll}
a_{i+1} & \text{if} & a_{i}\leq x<a_{i+1},~~ 1\leq i<m \\
a_{1}=1 & \text{if} & x = a_{m}=n
\end{array}%
\right.
\]%
in the case that $1,n\in Y$. Notice that $P_0 :=\ker \eta \in \mathcal{P}_{m}$ if $1\notin Y$ or $n\notin Y$ and $\ker \eta \in \mathcal{Q}_{m}$ if $1,n\in Y$.
In fact, $\eta \in \OP(X,Y)$ with $X_{1}=\{1,2,\ldots,a_{m}-1\}$ and $X_{2}=\{a_{m},a_{m}+1,\ldots,n\}$.
Moreover, $\eta |_{Y}$ is a permutation on $Y$, namely
$$\eta |_{Y}=\left(
\begin{array}{cccc}
a_{1} & \ldots & a_{m-1} & a_{m} \\
a_{2} & \cdots & a_{m} & a_{1}%
\end{array}%
\right).$$
We will denote by $\S(Y)$ the set of all permutations on $Y$.
Note, $\beta \in \O(X,Y)$ implies that either $\beta|_{Y}$ is the identity mapping on $Y$ or $\beta|_{Y} \notin \S(Y)$.

\section{The relative rank of $\OP(X,Y)$ modulo $\O(X,Y)$}

In this section, we determine the relative rank of $\OP(X,Y)$ modulo $\O(X,Y)$. A part of these results were presented at the 47-th Spring Conference of the Union of Bulgarian Mathematicians in March 2018 and are published in the proceedings of this conference \cite{DKT}.

If $m=1$ then $\OP(X,Y)$ is the set of
all constant mappings and coincides with $\O(X,Y)$, i.e. $\rank(\OP(X,Y):\O(X,Y))=0$. So, we admit that $m>1$.

First, we will show that
$$\mathcal{A}:=\{\alpha_{P}: P\in \mathcal{P}_{m}\}\cup \{\eta\}$$
is a relative generating set of $\OP(X,Y)$ modulo $\O(X,Y)$.
Notice that $\eta=\alpha_{P_0}$ if $1\notin Y$ or $n\notin Y$.

\begin{lemma}\label{le1}
For each $\alpha \in \OP(X,Y)$ with $\rank\alpha =m$, there is $\widehat{%
\alpha }\in \{\alpha _{P}: P\in \mathcal{P}_{m}\} \cup \O(X,Y)$ with $\ker \alpha =\ker \widehat{\alpha }$.
\end{lemma}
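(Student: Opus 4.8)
The plan is to reduce the statement entirely to the kernel classification recorded just before the lemma. The starting point is the observation that any $\beta \in \OP(X,Y)$ with $|X\beta| = l$ satisfies $\ker \beta \in \mathcal{Q}_l \cup \mathcal{P}_l$. Since $\rank \alpha = m$ means $|X\alpha| = m$, applying this observation with $l = m$ gives $\ker \alpha \in \mathcal{Q}_m \cup \mathcal{P}_m$. I would then split the argument into the two cases according to which of these families contains $\ker \alpha$, and in each case exhibit an explicit $\widehat{\alpha}$ with $\ker \widehat{\alpha} = \ker \alpha$.

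In the case $\ker \alpha \in \mathcal{P}_m$ there is nothing to construct: I would simply set $\widehat{\alpha} := \alpha_{\ker \alpha}$, which is defined precisely for partitions in $\mathcal{P}_m$. By the defining property already recorded (``Clearly, $\ker \alpha_P = P$''), we have $\ker \alpha_P = P$ for every $P \in \mathcal{P}_m$, so taking $P = \ker \alpha$ yields $\ker \widehat{\alpha} = \ker \alpha$ with $\widehat{\alpha} \in \{\alpha_P : P \in \mathcal{P}_m\}$, as required.

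In the remaining case $\ker \alpha \in \mathcal{Q}_m$, write $\ker \alpha = \{A_1 < A_2 < \cdots < A_m\}$ with each $A_i$ convex. Here I would build an order-preserving witness directly: define $\widehat{\alpha}$ by $x\widehat{\alpha} := a_i$ whenever $x \in A_i$. Because the blocks are convex and ordered, $x \le y$ forces the index of the block containing $x$ to be at most that of the block containing $y$, whence $x\widehat{\alpha} \le y\widehat{\alpha}$; thus $\widehat{\alpha}$ is order-preserving. Its range is exactly $\{a_1, \ldots, a_m\} = Y$, so $\widehat{\alpha} \in \O(X,Y)$, and two points share an image under $\widehat{\alpha}$ if and only if they lie in a common block $A_i$, i.e. $\ker \widehat{\alpha} = \ker \alpha$. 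This settles both cases.

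I do not expect a serious obstacle: once the classification $\ker \alpha \in \mathcal{Q}_m \cup \mathcal{P}_m$ is granted, the lemma is a routine case distinction, the only verification being that the canonical map $A_i \mapsto a_i$ is genuinely order-preserving with the prescribed kernel. The one conceptual point worth flagging is that the lemma constrains only the kernel, not the transformation itself: an $\alpha$ whose kernel lies in $\mathcal{Q}_m$ need not be order-preserving (it may act on its ordered blocks by a cyclic shift), but since we are free to replace it by any transformation with the same kernel, the order-preserving representative always suffices.
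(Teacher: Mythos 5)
Your proof is correct, but it takes a genuinely different route from the paper's. You treat the classification $\ker \alpha \in \mathcal{Q}_m \cup \mathcal{P}_m$ (the unproven ``observation'' recorded in the preliminaries) as a black box and then supply only the easy converse direction: every $P \in \mathcal{P}_m$ is the kernel of $\alpha_P$ by construction, and every partition in $\mathcal{Q}_m$ is the kernel of the order-preserving surjection $A_i \mapsto a_i$, which lies in $\O(X,Y)$. The paper never invokes that observation here; it argues directly from the definition of orientation-preserving: writing $X = X_1 \cup X_2$ with $X_1 < X_2$, $X_1\alpha = \{x_1 < \cdots < x_r\}$ and $X_2\alpha = \{y_1 < \cdots < y_s\}$, it notes that these two images can share at most the single value $x_1 = y_s$ and splits on that. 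If $x_1 \neq y_s$, the kernel classes are the linearly ordered convex sets $x_1\alpha^{-1} < \cdots < x_r\alpha^{-1} < y_1\alpha^{-1} < \cdots < y_s\alpha^{-1}$ and an order-preserving $\widehat{\alpha}$ is exhibited; if $x_1 = y_s$, the merged class contains both $1$ and $n$, giving $\ker \alpha = \ker \alpha_P$ for the resulting $P \in \mathcal{P}_m$. In other words, the paper's proof of this lemma \emph{is} the verification (for $l = m$) of the classification you quote. Your appeal to it is formally legitimate --- the paper states it before the lemma and uses it independently elsewhere, e.g.\ for ranks $i < m$ in the proof of Proposition \ref{le2} --- but you should be aware that the entire mathematical substance of the lemma is hidden in that remark: if pressed to justify it, you would have to reproduce essentially the paper's $X_1$/$X_2$ analysis. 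What your route buys is brevity and modularity, making transparent that the lemma is just the classification repackaged with canonical kernel representatives; what the paper's route buys is a self-contained argument that simultaneously substantiates the preliminary observation it otherwise leaves unproved.
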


\begin{proof}
Let $\alpha \in \OP(X,Y)$ and let $X_{1},X_{2}\subseteq X$ as in the
definition of an orientation-preserving transformation. If $X_{2}=\emptyset$
then $\alpha \in \O(X,Y)$. Suppose now that $X_{2}\neq \emptyset $ and let $%
X_{1}\alpha =\{x_{1}<\cdots <x_{r}\}$ and $X_{2}\alpha =\{y_{1}<\cdots
<y_{s}\}$ for suitable natural numbers $r$ and $s$. We observe that $%
X_{1}\alpha $ and $X_{2}\alpha $ have at most one joint element (only $x_{1}=y_{s}$ could be possible). If $x_{1}\neq y_{s}$ then $\ker \alpha =\{x_{1}\alpha
^{-1}<\cdots <x_{r}\alpha ^{-1}<y_{1}\alpha ^{-1}<\cdots <y_{s}\alpha
^{-1}\}=\ker \widehat{\alpha }$ with $\widehat{\alpha }=\left(
\begin{array}{cccccc}
x_{1}\alpha ^{-1} & \cdots & x_{r}\alpha ^{-1} & y_{1}\alpha ^{-1} & \cdots
& y_{s}\alpha ^{-1} \\
a_{1} & \cdots & a_{r} & a_{r+1} & \cdots & a_{r+s}%
\end{array}%
\right) \in \O(X,Y)$. If $x_{1}=y_{s}$ then $1,n\in x_{1}\alpha ^{-1} =
y_{s}\alpha ^{-1}$ and $\ker \alpha = \ker \alpha_P$ with $P = \{x_{1}\alpha ^{-1}, x_{2}\alpha ^{-1}<\cdots <x_{r}\alpha^{-1}<y_{1}\alpha ^{-1}<\cdots <y_{s-1}\alpha ^{-1}\}\in \mathcal{P}_{m}$.
\end{proof}

\begin{proposition}\label{le2}
$\OP(X,Y)=\left\langle \O(X,Y),\mathcal{A}\right\rangle $.
\end{proposition}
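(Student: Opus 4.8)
The plan is to prove the two inclusions separately. The inclusion $\left\langle \O(X,Y),\mathcal{A}\right\rangle \subseteq \OP(X,Y)$ is immediate: each generator lies in $\OP(X,Y)$ (for $\O(X,Y)$ because $\O(X,Y)\subseteq \OP(X,Y)$, and for the $\alpha_P$ and $\eta$ this was checked in the preliminaries), and $\OP(X,Y)$ is closed under composition, being the intersection of the monoid $\OP(X)$ with the subsemigroup $\T(X,Y)$. The real content is the reverse inclusion, which I would prove by a case analysis on $\rank\alpha$ for $\alpha\in\OP(X,Y)$: first the full-rank case $\rank\alpha=m$, then a reduction of the case $\rank\alpha<m$ to it.

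For $\rank\alpha=m$, Lemma~\ref{le1} supplies $\widehat{\alpha}\in\{\alpha_P:P\in\mathcal{P}_m\}\cup\O(X,Y)$ with $\ker\alpha=\ker\widehat{\alpha}$, and in either case $\widehat{\alpha}\in\left\langle \O(X,Y),\mathcal{A}\right\rangle$. Since $\alpha$ and $\widehat{\alpha}$ share a kernel and both have image $Y$, there is a permutation $\sigma$ of $Y$ with $x\alpha=(x\widehat{\alpha})\sigma$ for all $x\in X$. The structural point is that a full-rank orientation-preserving map reads, along $1<2<\cdots<n$, as a cyclic rotation of $(a_1,\ldots,a_m)$ (on $X_1$ it hits a top segment of $Y$ increasingly, on $X_2$ a bottom segment increasingly, with $X_2\alpha\le X_1\alpha$); applying this to both $\alpha$ and $\widehat{\alpha}$ on their common block sequence shows $\sigma$ is a cyclic shift of $Y$, i.e. a power of the $m$-cycle $\eta|_Y$. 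As right composition only sees values in $Y$, I conclude $\alpha=\widehat{\alpha}\,\eta^{k}$ for a suitable $k$, so $\alpha\in\left\langle \O(X,Y),\mathcal{A}\right\rangle$.

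For $\rank\alpha=l<m$, I would factor $\alpha=\beta\gamma$ with $\beta\in\OP(X,Y)$ of rank $m$ and $\gamma\in\O(X,Y)$; then $\beta\in\left\langle \O(X,Y),\mathcal{A}\right\rangle$ by the full-rank case, giving $\alpha\in\left\langle \O(X,Y),\mathcal{A}\right\rangle$. Write $X=X_1\cup X_2$ for the orientation cut of $\alpha$. I build $\beta$ with the \emph{same} cut: on $X_1$ let $\beta$ be order-preserving onto a top segment $\{a_{t+1},\ldots,a_m\}$ of $Y$ refining $\alpha|_{X_1}$, and on $X_2$ order-preserving onto a bottom segment $\{a_1,\ldots,a_t\}$ refining $\alpha|_{X_2}$, for a suitable split $t$. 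Then $\beta$ is orientation-preserving (the $X_2$-values lie below the $X_1$-values), full rank, and $\ker\beta\subseteq\ker\alpha$. Defining $a_j\gamma:=x\alpha$ for any $x$ with $x\beta=a_j$ is unambiguous by this refinement, and since $\beta$ sends the bottom (resp.\ top) segment of $Y$ onto the bottom (resp.\ top) part of $X\alpha$, each order-preservingly, $\gamma$ is non-decreasing on $Y$ and extends to a member of $\O(X,Y)$. By construction $\beta\gamma=\alpha$.

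The main obstacle is this lower-rank factorization: I must produce $\beta$ that is simultaneously full-rank orientation-preserving and refines $\ker\alpha$, while the induced collapsing map $\gamma$ is order-preserving. Order-preservation of $\gamma$ is the delicate point and is exactly what forces $\beta$ to respect the cut of $\alpha$. Existence of a valid split $t$ is a short counting argument: writing $l_i$ for the number of values $\alpha$ takes on $X_i$, the constraints $l_1\le m-t\le|X_1|$ and $l_2\le t\le|X_2|$ are jointly satisfiable because $l\le m$ and $l_i\le|X_i|$; I expect this to be routine but it must be verified, including the boundary subtlety where $X_1\alpha$ and $X_2\alpha$ overlap in a single value (so $l_1+l_2=l+1$).
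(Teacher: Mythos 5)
Your proof is correct, and its first half (the rank-$m$ case) is essentially the paper's: both invoke Lemma~\ref{le1} and then absorb the discrepancy between $\alpha$ and $\widehat{\alpha}$ into a power of $\eta$, using the fact that a full-rank orientation-preserving map reads as a cyclic rotation of $(a_1,\ldots,a_m)$ along the common block sequence. Where you genuinely diverge is the case $\rank\alpha=l<m$. The paper splits this into two sub-cases according to whether $\ker\alpha\in\mathcal{P}_l$ or $\ker\alpha\in\mathcal{Q}_l$: in the first it takes a subpartition $P'\in\mathcal{P}_m$ of $\ker\alpha$, forms $\alpha_{P'}\eta^{k}$ for a carefully chosen power $k$ (chosen so that the residual map $f$ with $\alpha=\alpha_{P'}\eta^{k}f$ is order-preserving, the key verification being $nf\neq\min X\alpha$), and in the second it pre-composes with an order-preserving $\varphi$ to reduce to the first sub-case. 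You instead give a single construction: a full-rank orientation-preserving refinement $\beta$ of $\ker\alpha$ that respects $\alpha$'s orientation cut, sending $X_1$ order-preservingly onto a top segment and $X_2$ onto a bottom segment of $Y$, which makes the collapsing map $\gamma$ automatically order-preserving; $\beta$ is then handled by the already-proved full-rank case. Your route buys uniformity (no kernel-type case split, no delicate choice of $k$), at the cost of the interval-intersection argument for the split $t$, which does check out: one needs $\max(l_2,\,m-|X_1|)\le t\le\min(|X_2|,\,m-l_1)$, and all four resulting inequalities follow from $l_1\le|X_1|$, $l_2\le|X_2|$, $m\le n$, and $l_1+l_2\le l+1\le m$ (the last using $l<m$ strictly, which is exactly where your flagged boundary case matters). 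A pleasant byproduct of your construction: since your $\beta$ has all kernel classes convex, i.e.\ $\ker\beta\in\mathcal{Q}_m$, Lemma~\ref{le1} supplies $\widehat{\beta}\in\O(X,Y)$, so every element of $\OP(X,Y)$ of rank below $m$ already lies in $\left\langle \O(X,Y),\eta\right\rangle$; this makes transparent, in a way the paper's proof does not, that the generators $\alpha_P$ are needed only for the full-rank maps with kernel in $\mathcal{P}_m$, consistent with Lemma~\ref{le3}.
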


\begin{proof}
Let $\beta \in \OP(X,Y)$ with $\rank\beta =m$. Then there is $\theta \in
\{\alpha _{P}:P\in \mathcal{P}_{m}\}\cup \O(X,Y)$ with $\ker \beta =\ker
\theta$ by Lemma \ref{le1}. In particular, there is $r\in \{0,\ldots ,m-1\}$ with $%
a_{1}\theta ^{-1}=a_{r+1}\beta ^{-1}$. Then it is easy to verify that $\beta
=\theta \eta ^{r}$, where $\eta ^{0}=\eta ^{m}$. \newline
Admit now that $i=\rank\beta <m$. Suppose that $\ker \beta \in \mathcal{P}_{i}$, say $\ker \beta =\{A_{1},A_{2}<\cdots <A_{i}\}$
with $1,n\in A_{1}$. Then there is a subpartition $P'\in \mathcal{P}_{m}$ of $\ker\beta$. We put $\theta =\alpha _{P'}$, $a=\min X\beta$, and let $T$ be a
transversal of $\ker \theta$. In particular, we have $Y=\{x(\theta
|_{T})\eta ^{k}:x\in T\}$ for all $k\in \{1,\ldots ,m\}$. Since both
mappings $\theta |_{T}:T\rightarrow Y$ and $\eta |_{Y}:Y\rightarrow Y$ are
bijections, there is $k\in \{1,\ldots ,m\}$ with $a_{1}((\theta |_{T})\eta
^{k})^{-1}\beta =a$ and $a_{1}((\theta |_{T})\eta ^{k+1})^{-1}\beta \neq a$.
Moreover, since $(\theta |_{T})\eta^{k}$ is a bijection from $T$ to $Y$ and both transformations $\theta \eta^{k}$ and $\beta $ are orientation-preserving, it is easy to verify that $f^{\ast }=((\theta |_{T})\eta ^{k})^{-1}\beta $ can be extended to an orientation-preserving transformation $f$ defined by
$$xf=\left\{
\begin{array}{lll}
a_{1}f^{\ast } & \text{if} & x<a_{1}  \\
a_{i}f^{\ast } & \text{if} & a_{i}\leq x<a_{i+1},~~ 1\leq i<m \\
a_{m}f^{\ast } & \text{if} & a_{m}\leq x,
\end{array}%
\right.$$
i.e. $f$ and $f^{\ast }$ coincide on $Y$. Moreover, $%
a_{1}f=a_{1}f^{\ast }=a_{1}((\theta |_{T})\eta ^{k})^{-1}\beta =a$. In order
to show that $f$ is order-preserving, it left to verify that $nf\neq a$.
Assume that $nf=a$, where $n\geq a_{m}$. Then $nf=a_{m}f^{\ast }=a_{m}f$,
i.e. $(n,a_{m})\in \ker f$ and $n\eta =a_{m}\eta =a_{1}$. So, there is $%
x^{\ast }\in T$ such that $x^{\ast }((\theta |_{T})\eta ^{k})=a_{m}$, i.e. $%
x^{\ast }=a_{m}((\theta |_{T})\eta ^{k})^{-1}$. Now, we have $%
a=nf=a_{m}f^\ast=a_{m}((\theta |_{T})\eta ^{k})^{-1}\beta =a_{m}(\eta
^{k}|_Y)^{-1}(\theta |_{T})^{-1}\beta =a_{1}(\eta|_Y)^{-1}(\eta ^{k}|_Y)^{-1}(\theta
|_{T})^{-1}\beta =a_{1}((\theta |_{T})\eta ^{k+1})^{-1}\beta \neq a$, a
contradiction. \\
Finally, we will verify that $\beta =\theta \eta ^{k}f\in
\left\langle \O(X,Y),\mathcal{A}\right\rangle $. For this let $x\in X$. Then there is $%
\widetilde{x}\in T$ such that $(x,\widetilde{x})\in $ $\ker \beta $. So, we
have $x\theta \eta ^{k}f=x\theta \eta ^{k}f^{\ast }=\widetilde{x}\theta \eta
^{k}((\theta |_{T})\eta ^{k})^{-1}\beta =\widetilde{x}\beta =x\beta $.
\newline
Suppose now that $\ker \beta \notin \mathcal{P}_{i}$ and thus, $\ker \beta \in \mathcal{Q}_{i}$.
Let $X\beta =\{b_{1},\ldots b_{i}\}$ such that $b_{1}\beta ^{-1}<\cdots <b_{i}\beta ^{-1}
$. Then we define a transformation $\varphi $ by $x\varphi =a_{j}$ for all $%
x\in b_{j-1}\beta ^{-1}$ and $2\leq j\leq i+1$. Clearly, $\varphi \in \O(X,Y)$%
. Further, we define a transformation $\nu \in \T(X,Y)$ by
$$x\nu =\left\{
\begin{array}{ll}
b_{j-1} & \text{if }  a_{j}\leq x < a_{j+1},~~ 2\leq j\leq i \\
b_{i} & \text{otherwise.}
\end{array}%
\right. $$
Since $\beta $ is orientation-preserving, there is $k\in
\{1,\ldots ,i\}$ such that $k=i$ or $b_{1}<\cdots <b_{k-1}>b_{k}<\cdots
<b_{i}$. Then $X_{1}=\{a_{1},\ldots ,a_{k+1}-1\}$ and $X_{2}=\{a_{k+1},
\ldots ,n\}$ gives a partition of $X$ providing that $\nu $ is
orientation-preserving. Clearly, $\rank\nu =i$ and $1\nu =n\nu =b_{i}$.
Thus, it is easy to verify that $\ker \nu \in \mathcal{P}_{i}$. Hence, $\nu
\in \left\langle \O(X,Y),\mathcal{A}\right\rangle $ by the previous case and it remains
to show that $\beta =\varphi \nu \in \left\langle \O(X,Y),\mathcal{A}\right\rangle$.
For this let $x\in X$. Then $x\in b_{j}\beta ^{-1}$ for some $j\in
\{1,\ldots ,i\}$, i.e. $x\varphi \nu =a_{j+1}\nu=b_{j}=x\beta $.
\end{proof}
~~\\

The previous proposition shows that $\mathcal{A}$ is a relative generating set for $\OP(X,Y)$
modulo $\O(X,Y)$. It remains to show that $\mathcal{A}$ is of minimal size.

\begin{lemma}\label{le3}
Let $B\subseteq \OP(X,Y)$ be a relative generating set of $\OP(X,Y)$ modulo $%
\O(X,Y)$. Then $\mathcal{P}_{m}\subseteq \{\ker \alpha :\alpha \in B\}$.
\end{lemma}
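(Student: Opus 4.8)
The plan is to exploit the fact that composition can never increase rank, together with the fact that the kernel of a product is a subpartition of the kernel of its leftmost factor. Fix an arbitrary $P\in\mathcal{P}_{m}$ and consider the transformation $\alpha_{P}\in\OP(X,Y)$ constructed in the preliminaries; it satisfies $\ker\alpha_{P}=P$ and $\rank\alpha_{P}=|X\alpha_{P}|=|Y|=m$. Since $B$ is a relative generating set, $\alpha_{P}\in\langle\O(X,Y),B\rangle$, so I can write $\alpha_{P}=\gamma_{1}\gamma_{2}\cdots\gamma_{k}$ with $k\geq 1$ and each $\gamma_{j}\in\O(X,Y)\cup B$.

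First I would analyze the leftmost factor $\gamma_{1}$. As $X\alpha_{P}=(X\gamma_{1})\gamma_{2}\cdots\gamma_{k}\subseteq X\gamma_{1}\subseteq Y$, we have $m=\rank\alpha_{P}\leq\rank\gamma_{1}\leq|Y|=m$, so $\rank\gamma_{1}=m$ and $\ker\gamma_{1}$ has exactly $m$ blocks. On the other hand, $(x,y)\in\ker\gamma_{1}$ forces $x\gamma_{1}\cdots\gamma_{k}=y\gamma_{1}\cdots\gamma_{k}$, so $\ker\gamma_{1}$ is a subpartition of $\ker\alpha_{P}=P$. A partition with $m$ blocks that refines a partition with $m$ blocks must coincide with it, whence $\ker\gamma_{1}=P$.

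It then remains to rule out $\gamma_{1}\in\O(X,Y)$. For this I would invoke the observation recorded in the preliminaries that $\ker\beta\in\mathcal{Q}_{m}$ for every $\beta\in\O(X,Y)$ of rank $m$, together with the disjointness of $\mathcal{P}_{m}$ and $\mathcal{Q}_{m}$ for $m>1$: in a member of $\mathcal{Q}_{m}$ the endpoints $1$ and $n$ lie in the distinct blocks $A_{1}$ and $A_{m}$, whereas in a member of $\mathcal{P}_{m}$ they share the block $A_{1}$. Since $\ker\gamma_{1}=P\in\mathcal{P}_{m}$, the factor $\gamma_{1}$ cannot be order-preserving, so $\gamma_{1}\in B$, giving $P=\ker\gamma_{1}\in\{\ker\alpha:\alpha\in B\}$. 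As $P$ was arbitrary in $\mathcal{P}_{m}$, this yields $\mathcal{P}_{m}\subseteq\{\ker\alpha:\alpha\in B\}$.

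The only genuinely delicate point is the leftmost-factor analysis: one must confirm that the full rank $m$ is already realized by $\gamma_{1}$, and that the subpartition relation $\ker\gamma_{1}\subseteq P$ combined with the equal block count forces $\ker\gamma_{1}=P$. Once this is secured, everything else is routine bookkeeping with the definitions of $\mathcal{P}_{m}$ and $\mathcal{Q}_{m}$.
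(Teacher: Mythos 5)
Your proof is correct and takes essentially the same route as the paper's: factor $\alpha_P$ over $\O(X,Y)\cup B$, use the rank bounds $m=\rank\alpha_P\le\rank\gamma_1\le|Y|=m$ together with the refinement relation to force $\ker\gamma_1=P$, and exclude $\gamma_1\in\O(X,Y)$ because an order-preserving map of rank $m>1$ cannot have $1$ and $n$ in the same kernel class. The only cosmetic differences are that the paper collapses the tail of the product into a single factor $\theta_2\in\OP(X,Y)$ and rules out $\theta_1\in\O(X,Y)$ directly from $1\alpha_P=n\alpha_P$ rather than via the disjointness of $\mathcal{P}_m$ and $\mathcal{Q}_m$; the underlying argument is identical.
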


\begin{proof}
Let $P\in \mathcal{P}_{m}$. Since $\alpha _{P}\in \OP(X,Y)=\left\langle
\O(X,Y),B\right\rangle $, there are $\theta _{1}\in \O(X,Y)\cup B$ and $\theta
_{2}\in \OP(X,Y)$ with $\alpha _{P}=\theta _{1}\theta _{2}$. Because of $%
\rank\alpha _{P}=m$, we obtain $\ker \alpha _{P}=\ker \theta _{1}$. Since $%
1\alpha _{P}=n\alpha _{P}$, we conclude that $\theta _{1}\notin \O(X,Y)$,
i.e. $\theta _{1}\in B$ with $\ker \theta _{1}=\ker \alpha _{P}=P$.
\end{proof}
~~\\

In order to find a formula for the number of elements in $\mathcal{P}_{m}$,
we have to compute the number of possible partitions of $X$ into $m+1$ convex sets.
This number is $\left(
\begin{array}{c}
n-1 \\
m%
\end{array}%
\right)$. Thus, we have

\begin{remark}\label{re1}
$\left\vert \mathcal{P}_{m}\right\vert =\left(
\begin{array}{c}
n-1 \\
m
\end{array}
\right)$.
\end{remark}

Now, we are able to state the main result of the section. The relative rank
of $\OP(X,Y)$ modulo $\O(X,Y)$ depends of the fact whether both $1$ and $n$
belong to $Y$ or not.

\begin{theorem}\label{th5} For each $1 < m < n \in \mathbb{N}$,
\begin{enumerate}
  \item $\rank(\OP(X,Y):\O(X,Y))=\left(
\begin{array}{c}
n-1 \\
m%
\end{array}%
\right)$ if $1\notin Y$ or $n\notin Y$;
  \item $\rank(\OP(X,Y):\O(X,Y))=1+\left(
\begin{array}{c}
n-1 \\
m%
\end{array}%
\right)$ if $\{1,n\}\subseteq Y$.
\end{enumerate}
\end{theorem}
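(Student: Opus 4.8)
The plan is to establish the upper bound via Proposition \ref{le2} together with a direct count of $|\mathcal{A}|$, and the matching lower bound via Lemma \ref{le3}, the only genuine extra work being the $+1$ appearing in part (2). For the upper bound, since $\mathcal{A}=\{\alpha_P:P\in\mathcal{P}_m\}\cup\{\eta\}$ is a relative generating set, it suffices to count it. As $\ker\alpha_P=P$, the map $P\mapsto\alpha_P$ is injective, so $\{\alpha_P:P\in\mathcal{P}_m\}$ has $|\mathcal{P}_m|=\binom{n-1}{m}$ elements by Remark \ref{re1}. If $1\notin Y$ or $n\notin Y$, then $\eta=\alpha_{P_0}$ with $P_0\in\mathcal{P}_m$ already lies in this set, so $|\mathcal{A}|=\binom{n-1}{m}$. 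If $\{1,n\}\subseteq Y$, then $\ker\eta\in\mathcal{Q}_m$ is disjoint from $\mathcal{P}_m$, so $\eta\notin\{\alpha_P:P\in\mathcal{P}_m\}$ and $|\mathcal{A}|=\binom{n-1}{m}+1$.

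For the lower bound, let $B\subseteq\OP(X,Y)$ be any relative generating set. By Lemma \ref{le3} we have $\mathcal{P}_m\subseteq\{\ker\alpha:\alpha\in B\}$, and hence $|B|\ge|\mathcal{P}_m|=\binom{n-1}{m}$ using Remark \ref{re1}. In the case $1\notin Y$ or $n\notin Y$ this already matches the upper bound, proving part (1). The remaining task, which I expect to be the main obstacle, is to sharpen this to $|B|\ge\binom{n-1}{m}+1$ when $\{1,n\}\subseteq Y$.

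The key tool will be the restriction map $\rho:\T(X,Y)\to\T(Y)$ given by $\rho(\alpha)=\alpha|_Y$. Since $X\alpha\subseteq Y$ for every $\alpha\in\T(X,Y)$, one checks that $\rho$ is a monoid homomorphism: for $y\in Y$, $y(\alpha\beta)=(y\alpha)\beta=(y\alpha)(\beta|_Y)$ because $y\alpha\in Y$, whence $(\alpha\beta)|_Y=(\alpha|_Y)(\beta|_Y)$. I will combine this with the standard fact that in the finite monoid $\T(Y)$ a product $\tau_1\cdots\tau_k$ lies in $\S(Y)$ only if each factor does, since the image sizes $|Y\tau_1\cdots\tau_j|$ are non-increasing and must all equal $|Y|$ when the product is a bijection.

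Now I would argue by contradiction, assuming $|B|=\binom{n-1}{m}$. Since $\mathcal{P}_m\subseteq\{\ker\alpha:\alpha\in B\}$ has $\binom{n-1}{m}$ elements and $|B|=\binom{n-1}{m}$, every $\beta\in B$ satisfies $\ker\beta\in\mathcal{P}_m$. Writing $\ker\beta=\{A_1,A_2<\cdots<A_m\}$ with $1,n\in A_1$ and recalling that here $a_1=1$ and $a_m=n$ belong to $Y$, the distinct elements $a_1\ne a_m$ lie in the same block $A_1$, so $a_1\beta=a_m\beta$ and hence $\rho(\beta)=\beta|_Y\notin\S(Y)$. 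Moreover, every $\gamma\in\O(X,Y)$ satisfies $\gamma|_Y=\mathrm{id}_Y$ or $\gamma|_Y\notin\S(Y)$. Thus the only generators in $\O(X,Y)\cup B$ whose $\rho$-image is a permutation are order-preserving ones with $\rho$-image $\mathrm{id}_Y$. Writing $\eta=\gamma_1\cdots\gamma_k$ with each $\gamma_j\in\O(X,Y)\cup B$ and applying $\rho$, the fact that $\rho(\eta)\in\S(Y)$ forces every $\rho(\gamma_j)\in\S(Y)$, hence every $\rho(\gamma_j)=\mathrm{id}_Y$, so $\rho(\eta)=\mathrm{id}_Y$. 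This contradicts $\rho(\eta)=\eta|_Y=(a_1\,a_2\,\cdots\,a_m)$ being a nontrivial $m$-cycle (as $m>1$). Therefore $|B|\ge\binom{n-1}{m}+1$, which with the upper bound gives part (2).
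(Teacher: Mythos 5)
Your proof is correct, and its skeleton matches the paper's: upper bound from Proposition \ref{le2} plus the count in Remark \ref{re1} (including the observation that $\eta=\alpha_{P_0}$ absorbs $\eta$ into $\{\alpha_P\}$ when $1\notin Y$ or $n\notin Y$, while $\ker\eta\in\mathcal{Q}_m$ keeps it separate when $1,n\in Y$), and lower bound from Lemma \ref{le3}. The genuine difference is the mechanism for the $+1$ in part (2). Both you and the paper argue by contradiction from a factorization $\eta=\gamma_1\cdots\gamma_k$ over $\O(X,Y)\cup B$, and both exploit the same obstruction: under the contradiction hypothesis every element of $B$ has kernel in $\mathcal{P}_m$ and therefore glues $1$ and $n$, which both lie in $Y$. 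But the paper runs the argument through kernels and ranks: from $\rank\eta=m$ it gets $\ker\theta_1=\ker\eta\notin\mathcal{P}_m$ and $(1,n)\notin\ker\theta_i$ for $i\geq 2$, so no factor can lie in $B$, hence all factors are in $\O(X,Y)$ and $\eta$ would be order-preserving --- contradiction. You instead push everything through the restriction map $\alpha\mapsto\alpha|_Y$ into $\T(Y)$: since $\eta|_Y\in\S(Y)$ and image sizes are non-increasing along a product in the finite semigroup $\T(Y)$, every factor restricts to a permutation of $Y$; elements of $B$ do not (they glue $a_1=1$ and $a_m=n$), and order-preserving elements whose restriction is a permutation restrict to $\mathrm{id}_Y$, forcing $\eta|_Y=\mathrm{id}_Y$ --- contradiction with $\eta|_Y$ being an $m$-cycle. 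Your version is more uniform (no separate treatment of the first factor) and is in fact the technique the paper itself deploys later, in Section 3 (Lemma \ref{le8} and Lemma \ref{lenew}); the paper's version stays entirely within kernel/rank bookkeeping. One pedantic point: for $Y\neq X$ the set $\T(X,Y)$ is only a semigroup, not a monoid (the identity of $X$ is missing), so your $\rho$ is a semigroup homomorphism; this has no effect on the argument.
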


\begin{proof}
1. Note that $\ker \eta \in \mathcal{P}_{m}$ and $\eta = \alpha_{P_0}$. Hence, the set $\mathcal{A}=\{\alpha_{P}:P\in \mathcal{P}_{m}\}$ is a generating set of $\OP(X,Y)$ modulo $\O(X,Y)$ by Proposition \ref{le2}, i.e. the relative rank of $\OP(X,Y)$ modulo $\O(X,Y)$ is bounded by the cardinality of $\mathcal{P}_{m}$,
which is $\left(
\begin{array}{c}
n-1 \\
m%
\end{array}%
\right)$ by Remark \ref{re1}. But this number cannot be reduced by Lemma \ref{le3}.\\
\\
2. Let $B\subseteq \OP(X,Y)$ be a relative generating set of $\OP(X,Y)$ modulo $%
\O(X,Y)$. By Lemma \ref{le3}, we know that $\mathcal{P}_{m}\subseteq \{\ker \alpha
:\alpha \in B\}$. Assume that the equality holds. Note that $\ker \eta \in \mathcal{Q}_{m}$
and $\eta$ is not order-preserving. Hence, there are $\theta _{1},\ldots ,\theta _{l}\in
\O(X,Y)\cup B$, for a suitable natural number $l$, such that $\eta
=\theta _{1}\cdots \theta _{l}$. From $\rank\eta =m$, we obtain $\ker \theta
_{1}=\ker \eta $ and $\rank\theta _{i}=m$ for $i\in \{1,\ldots ,l\}$ and thus,
$\{1,n\}\subseteq Y$ implies $(1,n)\notin \ker \theta _{i}$ for $i \in
\{2,\ldots ,l\}$. This implies $\theta _{2},\ldots ,\theta _{l}\in \O(X,Y)$.
Since $\ker \theta _{1}=\ker \eta \notin \mathcal{P}_{m}$, we get $\theta
_{1}\in \O(X,Y)$, and consequently, $\eta =\theta _{1}\theta _{2}\cdots
\theta _{l}\in \O(X,Y)$, a contradiction. So, we have verified that $%
\left\vert \mathcal{P}_{m}\right\vert <\left\vert B\right\vert $, i.e. the
relative rank of $\OP(X,Y)$ modulo $\O(X,Y)$ is greater than $\left(
\begin{array}{c}
n-1 \\
m%
\end{array}%
\right) $. But it is bounded by $1+\left(
\begin{array}{c}
n-1 \\
m%
\end{array}%
\right)$ due to Proposition \ref{le2}. This proves the assertion.
\end{proof}
~~\\

We finish this section with the characterization of the minimal relative
generating sets of $\OP(X,Y)$ modulo $\O(X,Y)$. We will
recognize that among them there are sets with size greater than $\rank(\OP(X,Y):\O(X,Y))$.

\begin{theorem}
Let $B\subseteq \OP(X,Y)$. Then $B$ is a minimal relative generating set of $\OP(X,Y)$ modulo $\O(X,Y)$ if and only if for the set $\widetilde{B}=\{\beta
\in B:\ker \beta \in \mathcal{Q}_{m}\} \subseteq B$ the following three statements are
satisfied:\newline
$(i)$ $\mathcal{P}_{m} \subseteq \{\ker \beta :\beta \in B\setminus \widetilde{B}\}$,\newline
$(ii)$ $|B\setminus \widetilde{B}| = |\mathcal{P}_{m}|$, \newline
$(iii)$ $\eta |_{Y}\in \left\langle \beta |_{Y}:\beta \in B\right\rangle $ but
$\eta |_{Y}\notin \left\langle \beta |_{Y}:\beta \in B\setminus \{\gamma
\}\right\rangle $ for any $\gamma \in \widetilde{B}$.
\end{theorem}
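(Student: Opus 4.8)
The plan is to collapse the whole equivalence onto two ingredients already in hand, Lemma \ref{le3} and the restriction-to-$Y$ map. Consider $\Phi\colon \OP(X,Y)\to\T(Y)$, $\Phi(\beta)=\beta|_Y$. Since every $\beta\in\OP(X,Y)$ has image inside $Y$, one checks directly that $\Phi$ is a semigroup homomorphism, that $\Phi(\O(X,Y))\subseteq\O(Y)$, and that $\Phi$ is surjective onto $\OP(Y)$. The only permutation in $\O(Y)$ is the identity, while the permutations of $\OP(Y)$ are exactly the rotations $\langle\eta|_Y\rangle$. Applying $\Phi$ to $\langle\O(X,Y),B\rangle=\OP(X,Y)$, and using that a product in $\T(Y)$ is a bijection only when every factor is, I would first prove the clean criterion: $B$ is a relative generating set if and only if $(a)$ $\mathcal{P}_m\subseteq\{\ker\beta:\beta\in B\}$ and $(b)$ $\eta|_Y\in\langle\beta|_Y:\beta\in B\rangle$. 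Necessity of $(a)$ is Lemma \ref{le3}; necessity of $(b)$ is the image computation for $\Phi$.

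For sufficiency of the criterion I would generate each element of $\mathcal{A}$ and then invoke Proposition \ref{le2}. Given $P\in\mathcal{P}_m$, choose $\beta\in B$ with $\ker\beta=P$ by $(a)$, pick a transversal $T$, and write $\alpha_P=\beta\tau$ where $\tau|_Y=(\beta|_T)^{-1}\alpha_P|_T$; since both maps are orientation-preserving with kernel in $\mathcal{P}_m$, this $\tau|_Y$ is a power of $\eta|_Y$, which by $(b)$ lies in $\langle\beta|_Y:\beta\in B\rangle$, so $\tau$ can be replaced by a product of elements of $B$ agreeing with it on $Y=X\beta$. The same device handles $\eta$: when $1\notin Y$ or $n\notin Y$ one has $\ker\eta\in\mathcal{P}_m$ and uses $(a)$ directly, while when $\{1,n\}\subseteq Y$ one has $\ker\eta\in\mathcal{Q}_m$ and uses an order-preserving map of the same kernel from $\O(X,Y)$ in the role of $\beta$. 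Everything of rank below $m$ is then supplied by Proposition \ref{le2}.

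With the criterion in place the theorem reduces to minimality bookkeeping. For the backward implication, $(i)$ together with the first half of $(iii)$ is precisely $(a)+(b)$, so $B$ generates; I then check that every element is essential. By $(i)$ and $(ii)$ the kernel map $B\setminus\widetilde{B}\to\mathcal{P}_m$ is a bijection, so deleting any $\beta\in B\setminus\widetilde{B}$ removes a kernel of $\mathcal{P}_m$ and destroys $(a)$; deleting any $\gamma\in\widetilde{B}$ leaves $(a)$ intact (its kernel is in $\mathcal{Q}_m$) but destroys $(b)$ by the second half of $(iii)$. For the forward implication, $(i)$ and the first half of $(iii)$ hold for \emph{every} relative generating set by the criterion, and the second half of $(iii)$ is immediate from minimality, since a $\gamma\in\widetilde{B}$ whose removal preserved $(b)$ would leave $B\setminus\{\gamma\}$ generating.

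The hard part will be the forward direction of $(ii)$: that a minimal $B$ carries no surplus in $B\setminus\widetilde{B}$. One half is easy: any $\beta\in B\setminus\widetilde{B}$ of rank $<m$ has $|Y\beta|<m$, so $\beta|_Y$ is not a permutation and, by the bijection-factor argument, contributes nothing to $(b)$, while its kernel is not in $\mathcal{P}_m$, so deleting it preserves both $(a)$ and $(b)$, contradicting minimality. The genuinely delicate case is a repeated kernel: if $\beta_1,\beta_2\in B\setminus\widetilde{B}$ share a kernel $P\in\mathcal{P}_m$, then deleting either one preserves $(a)$, so to reach a contradiction with minimality I must show that one of $\beta_1|_Y,\beta_2|_Y$ is dispensable for generating $\eta|_Y$. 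This is exactly where the rigidity of same-kernel pairs, $\beta_2|_Y=\beta_1|_Y\,(\eta|_Y)^{k}$, has to be exploited, and it is the crux of the argument; I expect this step to require the most care, since it is here that the full-rotation behaviour of the restrictions, rather than merely their kernels, controls minimality.
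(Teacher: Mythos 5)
Your reduction to the criterion ``$B$ is a relative generating set if and only if (a) $\mathcal{P}_m\subseteq\{\ker\beta:\beta\in B\}$ and (b) $\eta|_Y\in\langle\beta|_Y:\beta\in B\rangle$'' is correct, and your proof of it is in fact \emph{sounder} than the paper's own argument: the paper produces $\alpha_P$ from a same-kernel $\beta\in B$ via the claim $\alpha_P=\beta^k$, which fails whenever $Y$ is not a transversal of $P$ (powers of such a $\beta$ collapse in rank), whereas your factorization $\alpha_P=\beta\tau$ with $\tau|_Y$ a power of $\eta|_Y$ supplied by (b) avoids this. The backward implication, and all of the forward implication except (ii), do follow from the criterion by the bookkeeping you describe. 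But the step you defer as ``the crux'' --- that a minimal $B$ cannot contain two elements of $B\setminus\widetilde B$ with the same kernel $P\in\mathcal{P}_m$ --- is a genuine gap, and it cannot be closed, because that claim is false.

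Concretely, let $n=7$ and $Y=\{1,\ldots,6\}$, so $m=6$ and $\mathcal{P}_6$ consists of the single partition $P=\{\{1,7\},\{2\},\{3\},\{4\},\{5\},\{6\}\}$. Take
\[
\beta_1=\left(\begin{array}{ccccccc}1&2&3&4&5&6&7\\3&4&5&6&1&2&3\end{array}\right),\qquad
\beta_2=\left(\begin{array}{ccccccc}1&2&3&4&5&6&7\\4&5&6&1&2&3&4\end{array}\right),
\]
both orientation-preserving with kernel $P$, so that $\beta_1|_Y=(\eta|_Y)^2$ and $\beta_2|_Y=(\eta|_Y)^3$. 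One computes $\eta=\beta_1^2\beta_2$ and $\alpha_P=\beta_1^3$, so $B=\{\beta_1,\beta_2\}$ contains $\mathcal{A}$ in its generated subsemigroup and is a relative generating set by Proposition \ref{le2}; yet neither singleton is one, since restriction to $Y$ is a homomorphism, order-preserving maps restrict to non-permutations or the identity, and $\eta|_Y$ lies in neither $\langle(\eta|_Y)^2\rangle$ nor $\langle(\eta|_Y)^3\rangle$. Hence $B$ is a \emph{minimal} relative generating set with $\widetilde B=\emptyset$ and $|B\setminus\widetilde B|=2>1=|\mathcal{P}_6|$, violating (ii). The same-kernel rigidity $\beta_2|_Y=\beta_1|_Y(\eta|_Y)^k$ you hoped to exploit only shows that both restrictions are powers of $\eta|_Y$; in a cyclic group of order $m$ two non-generating powers can jointly generate as soon as $m$ has two distinct prime divisors (here $2$ and $3$), so your intended argument can only succeed when $m$ is a prime power. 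Thus your instinct about where the difficulty sits is exactly right, but the ``only if'' half of the theorem is false as stated; note that the paper's own converse has the same hole, asserting that $\overline B\cup\widehat B$ is a relative generating set ``by the first part of the proof'' when, in this example, that set would be $\{\beta_1\}$, which does not generate.
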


\begin{proof}
Suppose that the conditions $(i) - (iii)$ are satisfied for $\widetilde{B}=\{\beta \in B:\ker\beta \in \mathcal{Q}_{m}\}$.
We will show that $\mathcal{A} \subseteq \left\langle \O(X,Y),B \right\rangle$.
Let $\alpha \in \mathcal{A}\setminus\{\eta\}$.
Then there is a partition $P = \{A_{1},A_{2}<\cdots<A_{m}\}\in \mathcal{P}_{m}$ such that
$$\alpha = \alpha_{P}=\left(
\begin{array}{cccc}
A_{1} & A_{2} & \cdots  & A_{m} \\
a_{1} & a_{2} & \cdots  & a_{m}%
\end{array}%
\right), \mbox{ if } 1 \notin Y \mbox{ or } n \notin Y,$$
or
$$\alpha = \alpha_{P}=\left(
\begin{array}{ccccc}
A_{1} & A_{2} & \cdots  & A_{m-1} & A_m\\
a_{2} & a_{3} & \cdots  & a_{m} & a_1
\end{array}
\right), \mbox{ if } 1, n \in Y.$$
Notice that in the latter case $a_1 = 1$ and $a_m = n$.\\
Further, from $(i)$ it follows that there is $\beta \in B$ with $\ker\beta =\ker\alpha_P$, i.e. $\beta = \alpha_P$ or
$$\beta =\left(
\begin{array}{ccccccc}
A_{1} & A_{2} & \cdots  & A_{m-i+1} & A_{m-i+2} & \cdots  & A_{m} \\
a_{i} & a_{i+1} & \cdots  & a_{m} & a_{1} & \cdots  & a_{i-1}%
\end{array}%
\right)$$
for some $i\in \{3,\ldots,m\}$. It is easy to verify that $\alpha_{P}=\beta^{k}\in \left\langle
B\right\rangle$, for a suitable natural number $k$. Hence, $\{\alpha_{P}: P \in \mathcal{P}_{m}\} \subseteq \left\langle\O(X,Y),B \right\rangle$.
Further, $\ker \eta \in \mathcal{P}_{m}$, whenever $1\notin Y$ or $n\notin Y$,
and $\ker \eta \in \mathcal{Q}_{m}$ otherwise. Thus, there
is $\delta \in \left\langle \O(X,Y), B \right\rangle $ with $\ker \delta
=\ker \eta$. Then we obtain as above that $\eta =\delta^{l}\in \left\langle \O(X,Y),B \right\rangle$,
for a suitable natural number $l$. Consequently, $\left\langle
\O(X,Y),\mathcal{A}\right\rangle \subseteq \left\langle \O(X,Y),B \right\rangle$.
By Proposition \ref{le2}, we obtain $\OP(X,Y)=\left\langle \O(X,Y),B \right\rangle$.
The generating set $B$ is minimal by properties $(i)$ and $(ii)$
together with Lemma \ref{le3} and by the property $(iii)$ of $\widetilde{B}$.

Conversely, let $B$ be a minimal relative generating set of $\OP(X,Y)$ modulo
$\O(X,Y)$. By Lemma \ref{le3}, there is a set $\overline{B}\subseteq B$ such that
$\mathcal{P}_{m}=\{\ker \beta :\beta \in \overline{B}\}$ and $\left\vert
\overline{B}\right\vert =\left\vert \mathcal{P}_{m}\right\vert$.
Since $\OP(X,Y) = \left\langle \O(X,Y),B \right\rangle$, there are
$\beta _{1},\ldots ,\beta _{k}\in \O(X,Y)\cup B$ such that $\eta =\beta
_{1}\cdots \beta _{k}$. Without loss of generality, we can assume that
there is not $\gamma \in \{\beta _{i}:1\leq i\leq k$, $\ker \beta
_{i}\in \mathcal{Q}_{m}\}=: \widehat{B}$ such that $\eta$ is a product of transformations in
$\overline{B}\cup (\widehat{B}\setminus \{\gamma \})$. In the first
part of the proof, we have shown that $\overline{B}\cup \widehat{B}$ is a
relative generating set of $\OP(X,Y)$ modulo $\O(X,Y)$. Because of the
minimality of $B$, we have $B=\overline{B}\cup \widehat{B}$, where $\{\ker
\beta :\beta \in B\setminus \widehat{B}\}\supseteq\mathcal{P}_{m}$, $|B\setminus\widehat{B}| = |\overline{B}| = |\mathcal{P}_{m}|$ and $\eta |_{Y}\in
\left\langle \beta |_{Y}:\beta \in B\right\rangle$ but $\eta |_{Y}\notin
\left\langle \beta |_{Y}:\beta \in B\setminus \{\gamma \}\right\rangle $ for any $\gamma \in \widehat{B}$.
\end{proof}
~~\\

In particular, for the relative generating sets of minimal size we have

\begin{remark}
$B\subseteq \OP(X,Y)$ is a relative generating set of $\OP(X,Y)$ modulo $\O(X,Y)$
of minimal size if and only if $|\widetilde{B}| = 1$ if $1,n \in Y$ and $\widetilde{B} = \emptyset$, otherwise.
\end{remark}

\section{The relative rank of $\T(X,Y)$ modulo $\OP(X,Y)$}

In this section, we determine the relative rank of $\T(X,Y)$ modulo $\OP(X,Y)$
and characterize all minimal relative generating sets of $\T(X,Y)$ modulo $\OP(X,Y)$. Since $\O(X,Y)\leq \OP(X,Y)$, we see immediately that
$\rank(\T(X,Y):\OP(X,Y))\leq S(n,m)-\left(
\begin{array}{c}
n-1 \\
m-1%
\end{array}%
\right) +1$. First, we state a sufficient condition for a set $B\subseteq
\T(X,Y)$ to be a relative generating set of $\T(X,Y)$ modulo $\OP(X,Y)$.

\begin{lemma}\label{le7}
Let $B\subseteq \T(X,Y)$. If $\mathcal{R}_{m}\subseteq \{\ker \beta :\beta\in B\}$ and $\S(Y)\subseteq \left\langle \{\beta|_{Y}:\beta \in B\},\eta|_{Y}\right\rangle $ then $\left\langle \OP(X,Y),B\right\rangle =\T(X,Y)$.
\end{lemma}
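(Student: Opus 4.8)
The plan is to establish $\T(X,Y)\subseteq\langle\OP(X,Y),B\rangle$ (the reverse inclusion is trivial), organizing the argument by the rank of a transformation. The enabling observation is that, since every member of $\T(X,Y)$ has range inside $Y$, the restriction map $\beta\mapsto\beta|_Y$ is a homomorphism from $\T(X,Y)$ into the full transformation semigroup on $Y$: for $f,g\in\T(X,Y)$ and $y\in Y$ we have $y(fg)=(yf)(g|_Y)$ because $yf\in Y$, so $(fg)|_Y=(f|_Y)(g|_Y)$. Combining this with the hypothesis $\S(Y)\subseteq\langle\{\beta|_Y:\beta\in B\},\eta|_Y\rangle$ and with $\eta\in\OP(X,Y)$, I would first prove a lifting statement: for every $\pi\in\S(Y)$ there is $\Pi\in\langle\OP(X,Y),B\rangle$ with $\Pi|_Y=\pi$. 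Indeed, write $\pi$ as a product of factors drawn from $\{\beta|_Y:\beta\in B\}\cup\{\eta|_Y\}$, replace each $\beta|_Y$ by $\beta\in B$ and each $\eta|_Y$ by $\eta\in\OP(X,Y)$, and take $\Pi$ to be the resulting product; the homomorphism property forces $\Pi|_Y=\pi$.

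Next I would treat an arbitrary $\alpha\in\T(X,Y)$ of full rank $m$, so that $X\alpha=Y$ and $\ker\alpha$ is a partition of $X$ into $m$ blocks, hence lies in $\mathcal{Q}_m\cup\mathcal{P}_m\cup\mathcal{R}_m$. In each case one produces a rank-$m$ transformation $\gamma\in\langle\OP(X,Y),B\rangle$ with $\ker\gamma=\ker\alpha$: if $\ker\alpha\in\mathcal{Q}_m$, take the order-preserving transformation with that convex kernel (an element of $\O(X,Y)\subseteq\OP(X,Y)$); if $\ker\alpha\in\mathcal{P}_m$, take $\alpha_P\in\OP(X,Y)$; and if $\ker\alpha\in\mathcal{R}_m$, take $\gamma\in B$, which exists by the hypothesis $\mathcal{R}_m\subseteq\{\ker\beta:\beta\in B\}$. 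Since $\gamma$ and $\alpha$ are both surjections onto $Y$ with the same kernel, there is a unique $\pi\in\S(Y)$ with $\alpha=\gamma\pi$; lifting $\pi$ to $\Pi$ as above yields $\alpha=\gamma\Pi\in\langle\OP(X,Y),B\rangle$, because $x\gamma\in Y$ gives $(x\gamma)\Pi=(x\gamma)(\Pi|_Y)=(x\gamma)\pi=x\alpha$. Thus every full-rank transformation is generated.

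Finally, for $\alpha\in\T(X,Y)$ of rank $i<m$ with image $\{b_1<\cdots<b_i\}$ and blocks $C_j=b_j\alpha^{-1}$, I would factor $\alpha=\sigma\tau$ with $\sigma$ of full rank and $\tau\in\O(X,Y)$. Choose convex groups $G_1<\cdots<G_i$ partitioning $Y$ with $1\le|G_j|\le|C_j|$ and $\sum_j|G_j|=m$ (possible since $\sum_j|C_j|=n\ge m$, so the slack $\sum_j(|C_j|-1)=n-i\ge m-i$ can absorb the excess), let $\tau\in\O(X,Y)$ be order-preserving with $G_j\tau=\{b_j\}$, and let $\sigma$ map each $C_j$ onto $G_j$. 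Then $\sigma$ is a full-rank member of $\T(X,Y)$ and $\sigma\tau=\alpha$, since $x\in C_j$ gives $x\sigma\in G_j$ and $(x\sigma)\tau=b_j=x\alpha$. As $\sigma\in\langle\OP(X,Y),B\rangle$ by the full-rank case and $\tau\in\O(X,Y)\subseteq\OP(X,Y)$, we conclude $\alpha\in\langle\OP(X,Y),B\rangle$, which completes the proof.

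The main obstacle I anticipate is the lower-rank reduction: one must simultaneously arrange that the collapsing factor $\tau$ is order-preserving (so that it is freely available inside $\OP(X,Y)$) and that the refining factor $\sigma$ is genuinely full rank, which forces the combinatorial constraint $|G_j|\le|C_j|$ together with $\sum_j|G_j|=m$. Checking that compatible group sizes always exist, and that the homomorphism property legitimately converts the generation statement about $\S(Y)$ into generation inside $\langle\OP(X,Y),B\rangle$, are the two points that require care; everything else is bookkeeping on kernels and images.
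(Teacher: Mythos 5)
Your proof is correct, and it reaches the conclusion by a genuinely different decomposition than the paper's. The paper cases on kernel type rather than on rank: for $\gamma$ of arbitrary rank $k\le m$ with $\ker\gamma\in\mathcal{R}_k$ it refines $\ker\gamma$ to a partition $P\in\mathcal{R}_m$, picks $\lambda\in B$ with $\ker\lambda=P$, and writes $\gamma=\lambda\mu\nu$, where $\mu$ is a lifted permutation (obtained exactly as your $\Pi$, via the restriction homomorphism) and $\nu\in\O(X,Y)$ collapses a convex partition onto $X\gamma$; when instead $\ker\gamma\in\mathcal{Q}_k\cup\mathcal{P}_k$, it factors $\gamma=\rho_1\rho_2$ with $\rho_1\in\OP(X,Y)$ sharing the kernel of $\gamma$ and $\ker\rho_2\in\mathcal{R}_k$, reducing to the first case. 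You reduce in the opposite direction: all rank-$m$ maps are handled uniformly by a same-kernel representative (taken from $\O(X,Y)$, from $\{\alpha_P:P\in\mathcal{P}_m\}$, or from $B$, according to the kernel type) followed by a lifted permutation correction, and a map of rank $i<m$ is written as $\sigma\tau$ with $\sigma$ of rank $m$ and $\tau\in\O(X,Y)$, the feasibility of the block sizes $1\le|G_j|\le|C_j|$ with $\sum_j|G_j|=m$ resting on $i\le m\le n$. Both proofs hinge on the same two ingredients --- the restriction homomorphism $\beta\mapsto\beta|_Y$ used to lift words over $\S(Y)$, and the hypothesis $\mathcal{R}_m\subseteq\{\ker\beta:\beta\in B\}$ --- but yours is more modular: it isolates exactly where each hypothesis enters, and its only combinatorial check is the size count above, whereas the paper must argue that a low-rank $\mathcal{R}_k$-kernel admits a refinement in $\mathcal{R}_m$ of the right shape and must build the auxiliary partition $\{D_y : y\in X\rho_1\}$ in its second case. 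What the paper's route buys in exchange is that transformations of every rank with $\mathcal{R}$-type kernels are dispatched in a single stroke, without your image-side splitting of $Y$ into the blocks $G_j$.
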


\begin{proof}
Let $\gamma \in \T(X,Y)$ with $\rank \gamma =k \leq m$.
We will consider two cases.

\textit{Case 1.} Suppose that $\ker \gamma \in \mathcal{R}_{k}$.
Then $\ker \gamma$ contains a non-convex set which cannot be decomposed into two convex sets, which contain $1$ and $n$, respectively.
Since $k\leq m$, we can divide the partition $\ker \gamma$ into a partition
$P\in \mathcal{R}_{m}$ such that $P$ contains a non-convex set which cannot be decomposed into two convex sets, which contain $1$ and $n$, respectively
(if $k=m$ then we put $P=\ker \gamma$). Since $\mathcal{R}_{m}\subseteq \{\ker
\beta :\beta \in B\}$, there is $\lambda \in B$ with $\ker \lambda = P$.
It is clear that $X\lambda = Y$.\\
Further, let $X\gamma = \{y_1 < y_2 < \cdots < y_k\}$ and define the sets
\[A_i = \{x \in Y : x\lambda^{-1} \subseteq y_i\gamma^{-1}\}\]
for $i = 1,\ldots,k$. It is clear that $\{A_1, A_2, \ldots, A_k\}$ is a partition of $Y$.
Moreover, let $\{C_1 < C_2 < \cdots < C_k\} \in \mathcal{Q}_k$ be a partition of $X$
such that $|C_i \cap Y| = |A_i|$ for all $i = 1,\ldots,k$.
Let $A_i = \{a_{i_1} < a_{i_2} < \cdots < a_{i_{t_i}}\}$ and $C_i \cap Y = \{c_{i_1} < c_{i_2} < \cdots < c_{i_{t_i}}\}$ with $t_i \in \{1, \ldots, m\}$ for $i \in \{1, \ldots, k\}$.
We define a bijection
\[\sigma: \bigcup_{i=1}^k A_i = Y \longrightarrow \bigcup_{i=1}^k (C_i \cap Y) = Y\]
on $Y$ with $a_{i_l}\sigma = c_{i_l}$, for $l = 1,\ldots,t_i$ and $i = 1,\ldots,k$. Since $\sigma \in \S(Y)$ and
$\S(Y)\subseteq \left\langle \{\beta|_{Y}:\beta \in B\},\eta|_{Y}\right\rangle$ there is
$\mu \in \langle B, \eta\rangle$ with $\mu|_Y = \sigma$.\\
Finally, we define a transformation $\nu \in \O(X,Y) \subseteq \OP(X,Y)$ with
$\ker \nu = \{C_1 < C_2 < \cdots < C_k\}$ and $x\nu = y_i$ for all $x \in C_i$ and $i = 1,\ldots,k$.\\
Therefore, we have $\lambda, \mu, \nu \in \left\langle \OP(X,Y),B\right\rangle$ and it
remains to show that $\gamma = \lambda\mu\nu$, i.e. $\gamma \in \left\langle \OP(X,Y),B\right\rangle$.
Let $x \in X$. Then $x\gamma = y_i$ for some $i \in \{1,\ldots,k\}$ and we have
\[x\gamma = y_i \Rightarrow x\lambda = z \in A_i \Rightarrow z\mu = u \in C_i \cap Y \Rightarrow u\nu = y_i.\]
Hence, $x\gamma = y_i = x(\lambda\mu\nu)$ and we conclude $\gamma = \lambda\mu\nu$.

\textit{Case 2.} Suppose that $\ker \gamma \notin \mathcal{R}_{k}$, i.e.
$\ker \gamma \in \mathcal{Q}_{k}\cup \mathcal{P}_{k}$ and there is $\rho_{1}\in \OP(X,Y)$ with $\ker\rho_{1}=\ker \gamma$.
Further, there is a partition $P = \{D_{y} : y \in X\rho_{1}\} \in \mathcal{R}_{k}$
such that $y\in D_{y}$, for all $y\in X\rho_{1}$. Then we define a
transformation $\rho_{2} : X \rightarrow X\gamma$ with
$\ker\rho_{2} = P$ and $\{x\rho_2\} = y\rho_1^{-1}\gamma$ for all $x \in D_y$ and $y\in X\rho_{1}$.
Since $\ker \rho_{1}=\ker \gamma$, the transformation $\rho_2$ is
well defined and we have $\gamma = \rho_{1}\rho_{2}$.
Moreover, $\rho_2 \in \left\langle \OP(X,Y),B \right\rangle$
by Case 1 (since $\ker \rho_2 \in \mathcal{R}_{k}$) and thus
$\gamma =\rho_{1}\rho_{2}\in \left\langle \OP(X,Y),B \right\rangle$.
\end{proof}

\begin{lemma}\label{lenew}
$\left\langle \eta |_{Y}\right\rangle =\left\langle \left\{ \beta
|_{Y}:\beta \in \OP(X,Y)\right\} \right\rangle \cap \S(Y)$.
\end{lemma}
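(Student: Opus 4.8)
The plan is to prove the two inclusions separately. The inclusion $\langle\eta|_{Y}\rangle\subseteq\langle\{\beta|_{Y}:\beta\in\OP(X,Y)\}\rangle\cap\S(Y)$ is immediate: we recorded in the preliminaries that $\eta\in\OP(X,Y)$ and that $\eta|_{Y}$ is the $m$-cycle sending $a_{i}$ to $a_{i+1}$, so in particular $\eta|_{Y}\in\S(Y)$. Hence every power $(\eta|_{Y})^{k}$ lies simultaneously in the generated subsemigroup and in $\S(Y)$, which gives this direction.

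For the reverse inclusion I would take $\sigma$ in the right-hand side and write $\sigma=\beta_{1}|_{Y}\cdots\beta_{k}|_{Y}$ with $\beta_{1},\ldots,\beta_{k}\in\OP(X,Y)$. The first step is to reduce to a single factor. Since each $\beta_{i}|_{Y}$ is a self-map of the finite set $Y$, the cardinality of the image cannot increase along the composition; as $|Y\sigma|=|Y|$, every partial product must have image of full size $m$, forcing each $\beta_{i}|_{Y}$ to be a permutation of $Y$. Thus it suffices to prove that whenever $\beta\in\OP(X,Y)$ and $\beta|_{Y}\in\S(Y)$, then $\beta|_{Y}\in\langle\eta|_{Y}\rangle$.

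To this end I would first observe that the restriction to the convex-ordered subset $Y$ of an orientation-preserving transformation of $X$ is again orientation-preserving as a map of the chain $Y$: taking $X_{1}<X_{2}$ from the definition for $\beta$ and intersecting with $Y$ yields a split $Y=(Y\cap X_{1})\cup(Y\cap X_{2})$ with $Y\cap X_{1}<Y\cap X_{2}$, witnessing that $\beta|_{Y}$ is order-preserving on each block while the lower block is mapped above the upper one. The main step is then the structural fact that an orientation-preserving \emph{permutation} of the finite chain $Y=\{a_{1}<\cdots<a_{m}\}$ is a rotation. Writing $j=|Y\cap X_{1}|$, the condition that $X_{2}$-values lie below $X_{1}$-values together with bijectivity forces $Y\cap X_{1}=\{a_{1},\ldots,a_{j}\}$ to be mapped order-isomorphically onto the $j$ largest elements and $Y\cap X_{2}$ onto the $m-j$ smallest; reading indices modulo $m$ this gives $a_{i}(\beta|_{Y})=a_{i+(m-j)}$, that is, $\beta|_{Y}=(\eta|_{Y})^{m-j}\in\langle\eta|_{Y}\rangle$. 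The degenerate cases $Y\cap X_{1}=\emptyset$ or $Y\cap X_{2}=\emptyset$ make $\beta|_{Y}$ order-preserving, hence the identity $=(\eta|_{Y})^{m}$. Combining, $\sigma$ is a product of powers of $\eta|_{Y}$ and so lies in $\langle\eta|_{Y}\rangle$, completing the reverse inclusion.

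The main obstacle is the structural rotation step: one must argue carefully from the two order conditions and from injectivity that the two increasing blocks are forced into complementary upper and lower value ranges of $Y$, which is precisely what pins the map down as an exact rotation rather than merely an orientation-preserving bijection. By contrast, the reduction that each factor must be a permutation and the observation that the restriction remains orientation-preserving on $Y$ are routine finiteness and definition-unwinding arguments.
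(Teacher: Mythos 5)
Your proof is correct, and its overall skeleton matches the paper's: the forward inclusion is immediate, and the reverse inclusion reduces to showing that any $\beta\in\OP(X,Y)$ with $\beta|_{Y}\in\S(Y)$ satisfies $\beta|_{Y}\in\left\langle \eta|_{Y}\right\rangle$. Where you differ is in how the rotation fact is obtained. The paper argues through kernels: since $\beta|_{Y}$ is a bijection, $Y$ is a transversal of $\ker\beta$, which lies in $\mathcal{P}_{m}\cup\mathcal{Q}_{m}$ because $\beta$ is orientation-preserving of rank $m$; orientation-preservation then forces $\beta$ to rotate the blocks $A_{1},\ldots,A_{m}$, so that $\beta|_{Y}$ is a power of $\eta|_{Y}$. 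You instead restrict the witnessing split $X_{1}<X_{2}$ to $Y$, note that $\beta|_{Y}$ is an orientation-preserving self-map of the chain $Y$, and prove from scratch that an orientation-preserving \emph{permutation} of a finite chain is an exact rotation, by pinning the two increasing blocks into complementary upper and lower value ranges (with correct handling of the degenerate cases, where $\beta|_{Y}$ is an order-preserving bijection and hence the identity). The two arguments are equivalent in substance, but yours is more self-contained, since it does not invoke the classification $\ker\beta\in\mathcal{P}_{m}\cup\mathcal{Q}_{m}$ recorded in the preliminaries. One genuine improvement in your write-up: you make explicit the finiteness reduction that a product $\beta_{1}|_{Y}\cdots\beta_{k}|_{Y}$ lying in $\S(Y)$ forces every factor $\beta_{i}|_{Y}$ to be a permutation of $Y$; the paper uses this step silently when it passes from arbitrary elements of $\left\langle \left\{ \beta|_{Y}:\beta\in\OP(X,Y)\right\} \right\rangle \cap\S(Y)$ to a single $\beta$ with $\beta|_{Y}\in\S(Y)$, so your version closes a small gap in rigor.
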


\begin{proof}
The inclusion $\left\langle \eta |_{Y}\right\rangle \subseteq \left\langle
\left\{ \beta |_{Y}:\beta \in \OP(X,Y)\right\} \right\rangle \cap \S(Y)$ is
obviously. Let now $\beta \in \OP(X,Y)$ with $\beta |_{Y}\in \S(Y)$. Then
there is $k\in \{1,\ldots ,m\}$ such that
\[
\beta =\left(
\begin{array}{cccccc}
A_{1} & \cdots  & A_{m-k+1} & A_{m-k} & \cdots  & A_{m} \\
a_{k} & \cdots  & a_{m} & a_{1} & \cdots  & a_{k-1}%
\end{array}%
\right)
\]
with $\{A_{1},A_{2}<\cdots <A_{m}\} \in \mathcal{P}_{m} \cup \mathcal{Q}_{m}$ and $a_{i}\in A_i$ for $i\in \{1,\ldots ,m\}$
since $Y$ is a transversal of $\ker \beta $. Thus,
\[
\beta |_{Y}=\left(
\begin{array}{cccccc}
a_{1} & \cdots  & a_{m-k+1} & a_{m-k} & \cdots  & a_{m} \\
a_{k} & \cdots  & a_{m} & a_{1} & \cdots  & a_{k-1}%
\end{array}%
\right) =(\eta |_{Y})^{m-k+1}\in \left\langle \eta |_{Y}\right\rangle.
\]
This shows that $\left\langle \left\{ \beta |_{Y}:\beta \in \OP(X,Y)\right\}
\right\rangle \cap \S(Y)\subseteq \{(\eta |_{Y})^{p}:p\in \mathbb{N}
\}=\left\langle \eta |_{Y}\right\rangle$.
\end{proof}

The following lemmas give us necessary conditions for a set $B\subseteq
\T(X,Y)$ to be a relative generating set of $\T(X,Y)$ modulo $\OP(X,Y)$.

\begin{lemma}\label{le8}
Let $B\subseteq \T(X,Y)\setminus \OP(X,Y)$ with $\left\langle \OP(X,Y),B\right\rangle =\T(X,Y)$.
Then $\S(Y)\subseteq \left\langle \{\beta |_{Y}:\beta \in B\},\eta|_{Y}\right\rangle$.
\end{lemma}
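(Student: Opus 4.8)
The plan is to exploit the fact that restriction to $Y$ is a homomorphism on $\T(X,Y)$. Concretely, every element of $\T(X,Y)$ maps $X$, and in particular $Y$, into $Y$; hence for any product $\gamma=\theta_{1}\cdots\theta_{k}$ of elements of $\T(X,Y)$ and any $y\in Y$ one computes $y\gamma=y(\theta_{1}|_{Y})\cdots(\theta_{k}|_{Y})$, so that $\gamma|_{Y}=(\theta_{1}|_{Y})\cdots(\theta_{k}|_{Y})$. I would record this observation first, since it converts the hypothesis $\left\langle \OP(X,Y),B\right\rangle=\T(X,Y)$ about generation in $\T(X,Y)$ into a statement about generation inside the full transformation semigroup on the finite set $Y$.

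Next, I would fix an arbitrary $\sigma\in\S(Y)$ and realize it as the restriction to $Y$ of an element of $\T(X,Y)$: extend $\sigma$ to a transformation $\gamma\in\T(X,Y)$ by defining $\gamma$ arbitrarily on $X\setminus Y$ with image in $Y$. Since $\sigma$ is a bijection of $Y$, we have $Y=Y\sigma\subseteq X\gamma$, hence $X\gamma=Y$ and $\gamma|_{Y}=\sigma$. Because $\left\langle \OP(X,Y),B\right\rangle=\T(X,Y)$, I can write $\gamma=\theta_{1}\cdots\theta_{k}$ with each $\theta_{i}\in\OP(X,Y)\cup B$.

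The key step is then to observe that every factor restricts to a permutation of $Y$. Indeed, by the first observation $\sigma=\gamma|_{Y}=(\theta_{1}|_{Y})\cdots(\theta_{k}|_{Y})$ expresses the bijection $\sigma$ as a composition of self-maps of the finite set $Y$; the usual argument (injectivity of the composite forces injectivity, and hence, on a finite set, bijectivity of each factor) yields $\theta_{i}|_{Y}\in\S(Y)$ for every $i$. This is the point where I expect the only genuine care is needed, since a priori the individual factors $\theta_{i}|_{Y}$ need not be bijective.

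Finally, I would classify the factors. If $\theta_{i}\in B$, then $\theta_{i}|_{Y}\in\{\beta|_{Y}:\beta\in B\}$ is already a generator. If $\theta_{i}\in\OP(X,Y)$, then $\theta_{i}|_{Y}\in\{\beta|_{Y}:\beta\in\OP(X,Y)\}\cap\S(Y)$, so Lemma \ref{lenew} gives $\theta_{i}|_{Y}\in\left\langle \eta|_{Y}\right\rangle$. In either case $\theta_{i}|_{Y}\in\left\langle \{\beta|_{Y}:\beta\in B\},\eta|_{Y}\right\rangle$, whence $\sigma=(\theta_{1}|_{Y})\cdots(\theta_{k}|_{Y})\in\left\langle \{\beta|_{Y}:\beta\in B\},\eta|_{Y}\right\rangle$. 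As $\sigma\in\S(Y)$ was arbitrary, this establishes $\S(Y)\subseteq\left\langle \{\beta|_{Y}:\beta\in B\},\eta|_{Y}\right\rangle$, as required.
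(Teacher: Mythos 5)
Your proof is correct and follows essentially the same route as the paper: extend $\sigma\in\S(Y)$ to $\gamma\in\T(X,Y)$, factor $\gamma$ over $\OP(X,Y)\cup B$, restrict the factorization to $Y$, deduce that each factor restricts to a permutation of $Y$, and dispatch the $\OP(X,Y)$ factors via Lemma \ref{lenew}. The only difference is cosmetic: you spell out the finiteness argument showing each $\theta_i|_Y$ is bijective, which the paper merely asserts.
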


\begin{proof}
Let $\sigma \in \S(Y)$. We extend $\sigma$ to a transformation $\gamma : X \rightarrow Y$,
i.e. $\gamma|_{Y}=\sigma$. Hence, there are $\gamma_{1},\ldots,\gamma_{k}\in \OP(X,Y)\cup B$
(for a suitable natural number $k$) such that $\gamma=\gamma_{1}\cdots \gamma_{k}$.
Since the image of any transformation in $\T(X,Y)$ belongs to $Y$, we have
$\sigma =\gamma|_{Y}=\gamma_{1}|_{Y}\cdots \gamma_{k}|_{Y}$. Moreover, from $\sigma \in \S(Y)$,
we conclude $\gamma_{i}|_{Y}\in \S(Y)$ for $1\leq i\leq k$.
Let $\gamma_{i}\in \OP(X,Y)$ for some $i\in\{1,\ldots,k\}$. Then by Lemma \ref{lenew}
$$\gamma_{i}|_{Y}=\left(
\begin{array}{cccccc}
a_{1} & \cdots  & a_{t} & a_{t+1} & \cdots  & a_{m} \\
a_{m-t+1} & \cdots  & a_{m} & a_{1} & \cdots  & a_{m-t}%
\end{array}%
\right) \in \langle\eta |_{Y}\rangle$$
for a suitable natural number $t$. This shows $\sigma
\in \left\langle \{\beta |_{Y}:\beta \in B\},\eta |_{Y}\right\rangle$.
\end{proof}

\begin{lemma}\label{le9}
Let $B\subseteq \T(X,Y)\setminus \OP(X,Y)$ with $\left\langle \OP(X,Y),B\right\rangle =\T(X,Y)$.
Then $\mathcal{R}_{m}\subseteq \{\ker \beta :\beta \in B\}$.
\end{lemma}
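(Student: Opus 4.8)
The plan is to fix an arbitrary partition $P\in\mathcal{R}_{m}$ and produce a single full-rank transformation whose kernel is $P$; factoring that transformation through the generating set $\OP(X,Y)\cup B$ will then force some member of $B$ to realize the kernel $P$. Concretely, writing $P=\{P_{1},\ldots,P_{m}\}$, let $\gamma\in\T(X,Y)$ be defined by $x\gamma=a_{i}$ whenever $x\in P_{i}$, for $1\le i\le m$. Then $\ker\gamma=P$ and $X\gamma=Y$, so $\rank\gamma=m$, and since $P\in\mathcal{R}_{m}$ the transformation $\gamma$ is not orientation-preserving. This mirrors the witness construction used in Lemma \ref{le3}, except that now the witness lives outside $\mathcal{Q}_{m}\cup\mathcal{P}_{m}$.

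Because $\left\langle\OP(X,Y),B\right\rangle=\T(X,Y)$, I can write $\gamma=\theta_{1}\cdots\theta_{k}$ with each $\theta_{j}\in\OP(X,Y)\cup B$. The crux is the multiplicativity of rank: from $X\gamma=(X\theta_{1})(\theta_{2}\cdots\theta_{k})$ one gets $\rank\gamma\le\rank\theta_{1}\le m$, and $\rank\gamma=m$ forces $\rank\theta_{1}=m$. Moreover $\ker\theta_{1}\subseteq\ker\gamma$ as equivalences, since $x\theta_{1}=y\theta_{1}$ implies $x\gamma=y\gamma$; as both partitions have exactly $m$ blocks, this inclusion with equal block count yields $\ker\theta_{1}=\ker\gamma=P$. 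This refinement-with-equal-block-count step is the only point requiring care, and it is the same device that drives Lemma \ref{le3}.

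It then remains to place $\theta_{1}$ inside $B$. Since any $\beta\in\OP(X,Y)$ with $|X\beta|=m$ has $\ker\beta\in\mathcal{Q}_{m}\cup\mathcal{P}_{m}$, while $P\in\mathcal{R}_{m}$ means $P\notin\mathcal{Q}_{m}\cup\mathcal{P}_{m}$, the factor $\theta_{1}$ cannot be orientation-preserving; as $\theta_{1}\in\OP(X,Y)\cup B$, we conclude $\theta_{1}\in B$ with $\ker\theta_{1}=P$. Hence $P\in\{\ker\beta:\beta\in B\}$, and since $P\in\mathcal{R}_{m}$ was arbitrary, $\mathcal{R}_{m}\subseteq\{\ker\beta:\beta\in B\}$. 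I expect no genuine obstacle here: the argument is powered entirely by the non-increase of rank under composition together with the structural fact that $\mathcal{R}_{m}$-kernels are inaccessible to full-rank orientation-preserving maps. In particular the second hypothesis $\S(Y)\subseteq\left\langle\{\beta|_{Y}:\beta\in B\},\eta|_{Y}\right\rangle$ of the surrounding development plays no role in this lemma and need not be invoked.
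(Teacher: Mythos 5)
Your proof is correct and follows essentially the same route as the paper's: take a rank-$m$ witness $\gamma$ with $\ker\gamma=P$, factor it over $\OP(X,Y)\cup B$, use non-increase of rank to force $\ker\theta_{1}=\ker\gamma=P$, and then invoke the fact that full-rank orientation-preserving maps have kernels in $\mathcal{Q}_{m}\cup\mathcal{P}_{m}$ to conclude $\theta_{1}\in B$. The paper phrases this as a proof by contradiction and absorbs $\theta_{2}\cdots\theta_{k}$ into a single factor, but these are cosmetic differences; your observation that the $\S(Y)$ condition is irrelevant here is also accurate.
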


\begin{proof}
Assume that there is $P\in \mathcal{R}_{m}$ with $P \not\in \{\ker \beta :\beta \in B\}$.
Let $\gamma \in \T(X,Y)$ with $\ker\gamma =P$. Then there are $\theta_{1}\in \OP(X,Y)\cup B$ and
$\theta_{2}\in \T(X,Y)$ such that $\gamma = \theta _{1}\theta _{2}$. Since
$\rank\gamma = m$, we obtain $\ker \gamma =\ker \theta_{1}= P$. Thus,
$\theta_{1}\not\in B$, i.e. $\theta_{1}\in \OP(X,Y)$ and
$\ker \theta_{1}\in \mathcal{Q}_{m}\cup \mathcal{P}_{m}$, contradicts $\ker \theta_{1} = P \in \mathcal{R}_{m}$.
\end{proof}
~~\\

Lemma \ref{le9} shows that $\rank(\T(X,Y):\OP(X,Y))\geq \left\vert \mathcal{R}_{m}\right\vert $. We will verify the equality.

\begin{lemma}\label{le10}
$\left\vert \mathcal{R}_{m}\right\vert = S(m,n)-\left(
\begin{array}{c}
n \\
m%
\end{array}%
\right)$.
\end{lemma}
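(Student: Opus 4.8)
The lemma as typed reads $|\mathcal{R}_m| = S(m,n) - \binom{n}{m}$, but this is a transposition: the Stirling number of the second kind is $S(n,m)$ (with $n \geq m$), and the count must be $|\mathcal{R}_m| = S(n,m) - \binom{n}{m}$. I will prove this corrected identity.

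The plan is to count partitions of $X$ into exactly $m$ nonempty blocks by the three-way classification already fixed in the paper: every such partition lies in exactly one of $\mathcal{Q}_m$, $\mathcal{P}_m$, or $\mathcal{R}_m$. Since these three families are pairwise disjoint and exhaust all $m$-block partitions of $X$, and the total number of $m$-block partitions of an $n$-element set is the Stirling number $S(n,m)$, I immediately obtain the additive identity
\[
S(n,m) = |\mathcal{Q}_m| + |\mathcal{P}_m| + |\mathcal{R}_m|.
\]
Thus it suffices to compute $|\mathcal{Q}_m|$ and $|\mathcal{P}_m|$ and solve for $|\mathcal{R}_m|$.

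For $|\mathcal{Q}_m|$ I would argue that a partition into $m$ convex blocks $A_1 < A_2 < \cdots < A_m$ is determined precisely by choosing where to cut the chain $1 < 2 < \cdots < n$: there are $n-1$ gaps between consecutive elements, and selecting $m-1$ of them to be cut points yields exactly one such ordered convex partition. Hence $|\mathcal{Q}_m| = \binom{n-1}{m-1}$. For $|\mathcal{P}_m|$ I can simply invoke Remark~\ref{re1}, which already establishes $|\mathcal{P}_m| = \binom{n-1}{m}$ by counting convex partitions into $m+1$ blocks (the extra block arising because $A_1$ splits into its two convex pieces, one containing $1$ and one containing $n$). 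Substituting both values gives
\[
|\mathcal{R}_m| = S(n,m) - \binom{n-1}{m-1} - \binom{n-1}{m} = S(n,m) - \binom{n}{m},
\]
where the final step is Pascal's rule $\binom{n-1}{m-1} + \binom{n-1}{m} = \binom{n}{m}$.

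The only genuine point requiring care is the claim that $\mathcal{Q}_m$, $\mathcal{P}_m$, and $\mathcal{R}_m$ are disjoint and cover all $m$-block partitions. Coverage is built into the definition of $\mathcal{R}_m$ (it is the complement of $\mathcal{Q}_m \cup \mathcal{P}_m$), so I would only need to check that $\mathcal{Q}_m \cap \mathcal{P}_m = \emptyset$: a partition in $\mathcal{Q}_m$ has every block convex, whereas a partition in $\mathcal{P}_m$ has its block $A_1$ a union of two convex sets straddling $1$ and $n$, which is non-convex precisely because $m > 1$ forces an intermediate block to separate the two pieces. I expect this disjointness verification to be the main (and essentially only) obstacle, and it is routine; everything else is the Stirling decomposition plus two binomial counts and Pascal's rule.
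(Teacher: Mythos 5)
Your proof is correct, and you are right about the typo: the $S(m,n)$ in the statement is a transposition of $S(n,m)$ (the introduction of the paper itself uses $S(n,m)$ for the Stirling number). Your route differs from the paper's in one genuine respect: the paper never counts $\mathcal{Q}_m$ at all. Instead it cites Tinpun and Koppitz \cite{TK} for the cardinality of $\mathcal{D}_m := \mathcal{R}_m \cup \mathcal{P}_m$, namely $\left\vert \mathcal{D}_m \right\vert = S(n,m) - \binom{n-1}{m-1}$, and then computes $\left\vert \mathcal{R}_m \right\vert = \left\vert \mathcal{D}_m \right\vert - \left\vert \mathcal{P}_m \right\vert$ using the disjointness $\mathcal{R}_m \cap \mathcal{P}_m = \emptyset$, Remark \ref{re1}, and Pascal's rule. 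You instead prove the cited ingredient from scratch: your three-way decomposition $S(n,m) = \left\vert \mathcal{Q}_m \right\vert + \left\vert \mathcal{P}_m \right\vert + \left\vert \mathcal{R}_m \right\vert$ together with the cut-point count $\left\vert \mathcal{Q}_m \right\vert = \binom{n-1}{m-1}$ is exactly equivalent to the \cite{TK} fact, since $\mathcal{D}_m$ is the complement of $\mathcal{Q}_m$ within the set of all $m$-block partitions of $X$. So the two arguments share the same skeleton (disjointness of the classes, Remark \ref{re1}, Pascal's rule); yours buys self-containment at the cost of one extra elementary count, while the paper's buys brevity at the cost of an external citation. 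One small point worth making explicit either way: the classification and the counts are only valid under the standing assumption $m>1$ (for $m=1$ the partition $\{X\}$ lies in both $\mathcal{Q}_1$ and $\mathcal{P}_1$, and Remark \ref{re1} fails), which you implicitly use when you argue that the two pieces of $A_1$ are separated by an intermediate block.
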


\begin{proof}
The cardinality of the set $\mathcal{D}_{m}:=\mathcal{R}_{m}\cup \mathcal{P}_{m}$
was determined in \cite{TK}. The
authors show that $\left\vert \mathcal{D}_{m}\right\vert
=S(m,n)-\left(
\begin{array}{c}
n-1 \\
m-1%
\end{array}%
\right)$. Because of $\mathcal{R}_{m}\cap \mathcal{P}_{m}=\emptyset$, we
obtain $\mathcal{R}_{m}=\mathcal{D}_{m}\setminus \mathcal{P}_{m}$. Since
$|\mathcal{P}_{m}| =\left(
\begin{array}{c}
n-1 \\
m
\end{array}
\right)$ (see Remark \ref{re1}) it follows $\left\vert \mathcal{R}_{m}\right\vert =\left\vert \mathcal{D}_{m}\right\vert
-\left\vert \mathcal{P}_{m}\right\vert =S(m,n)-\left(
\begin{array}{c}
n-1 \\
m-1%
\end{array}%
\right) -\left(
\begin{array}{c}
n-1 \\
m%
\end{array}%
\right) =S(m,n)-\left(
\begin{array}{c}
n \\
m%
\end{array}%
\right)$.
\end{proof}
~~\\

Finally, we can state the relative rank of $\T(X,Y)$ modulo $\OP(X,Y)$.

\begin{theorem}\label{th11}
$\rank(\T(X,Y):\OP(X,Y))=S(m,n)-\left(
\begin{array}{c}
n \\
m%
\end{array}%
\right)$.
\end{theorem}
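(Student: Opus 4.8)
The plan is to trap $\rank(\T(X,Y):\OP(X,Y))$ between $|\mathcal{R}_m|$ on both sides and then apply Lemma \ref{le10}. For the lower bound I would take an arbitrary set $B$ with $\langle \OP(X,Y),B\rangle =\T(X,Y)$ and discard those elements of $B$ already lying in $\OP(X,Y)$; this neither changes the generated subsemigroup nor increases the size, so I may assume $B\subseteq \T(X,Y)\setminus \OP(X,Y)$. Lemma \ref{le9} then gives $\mathcal{R}_m\subseteq \{\ker\beta:\beta\in B\}$, and since a transformation determines its kernel, $|B|\ge |\{\ker\beta:\beta\in B\}|\ge |\mathcal{R}_m|$. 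Hence $\rank(\T(X,Y):\OP(X,Y))\ge |\mathcal{R}_m|$.

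For the reverse inequality I would build a relative generating set of size $|\mathcal{R}_m|$ and verify the hypotheses of Lemma \ref{le7}. For every $P\in \mathcal{R}_m$ fix some $\beta_P\in \T(X,Y)$ with $\ker\beta_P=P$; such a $\beta_P$ has rank $m$, so $X\beta_P=Y$, and since $\ker\beta_P\in\mathcal{R}_m$ while every transformation in $\OP(X,Y)$ has kernel in $\mathcal{Q}_m\cup\mathcal{P}_m$, we get $\beta_P\notin\OP(X,Y)$. Put $B=\{\beta_P:P\in\mathcal{R}_m\}$; the kernels are pairwise distinct, so $|B|=|\mathcal{R}_m|$, and the kernel condition $\mathcal{R}_m\subseteq\{\ker\beta:\beta\in B\}$ of Lemma \ref{le7} holds by construction. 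It remains only to arrange the restriction condition $\S(Y)\subseteq\langle\{\beta_P|_Y:P\in\mathcal{R}_m\},\eta|_Y\rangle$.

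This restriction condition is the crux and the main obstacle. Since $\eta|_Y$ is the $m$-cycle $(a_1\,a_2\,\cdots\,a_m)$, the group $\langle\eta|_Y\rangle$ is only cyclic of order $m$; moreover any product in which some factor $\beta_P|_Y$ fails to be a permutation has rank strictly less than $m$ on $Y$ and so is not a permutation. Hence for $m\ge 3$ the only way to reach all of $\S(Y)$ is to include at least one $\beta_P|_Y$ that is a transposition generating $\S(Y)$ together with the cycle. Now $\beta_P|_Y\in\S(Y)$ exactly when $Y$ is a transversal of $P$, and in that case the defining bijection (blocks onto $Y$) may be chosen so that $\beta_P|_Y$ is any prescribed permutation. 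So the key combinatorial claim I would prove is: for $3\le m<n$ there is $P^*\in\mathcal{R}_m$ of which $Y$ is a transversal. I would prove it constructively—seed $m$ blocks by $a_1,\dots,a_m$, pick $z\in X\setminus Y$, adjoin $z$ to the block of an $a_k$ chosen so that some $a_l$ lies strictly between $z$ and $a_k$ (a short case analysis on whether $z<a_1$, $a_i<z<a_{i+1}$, or $z>a_m$, using $m\ge 3$ to secure two $Y$-elements on one side of $z$), and keep every other block convex. The result has a single non-convex block containing at most one of $1,n$, so it lies in neither $\mathcal{Q}_m$ nor $\mathcal{P}_m$, whence $P^*\in\mathcal{R}_m$, while $Y$ remains a transversal. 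Choosing $\beta_{P^*}|_Y=(a_1\,a_2)$ then yields $\langle(a_1\,a_2),\eta|_Y\rangle=\S(Y)$.

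With the transposition present, Lemma \ref{le7} gives $\langle\OP(X,Y),B\rangle=\T(X,Y)$, so $\rank(\T(X,Y):\OP(X,Y))\le |B|=|\mathcal{R}_m|$. The low-rank cases are harmless: for $m=2$ one has $\S(Y)=\langle\eta|_Y\rangle$, so no transposition is needed and any choice of the $\beta_P$ suffices (and if $\mathcal{R}_m=\emptyset$ the empty set already generates, matching $|\mathcal{R}_m|=0$). Combining the two bounds with Lemma \ref{le10} yields $\rank(\T(X,Y):\OP(X,Y))=|\mathcal{R}_m|=S(m,n)-\left(\begin{array}{c} n \\ m \end{array}\right)$, as claimed. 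I expect the transversal claim of the third paragraph to be the only real difficulty, the remaining steps being bookkeeping around the already-established Lemmas \ref{le7}, \ref{le9}, and \ref{le10}.
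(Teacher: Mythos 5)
Your proof is correct and is essentially the paper's own argument: the lower bound comes from Lemma \ref{le9} (after discarding generators already lying in $\OP(X,Y)$, which changes neither the generated semigroup nor the cardinality) together with Lemma \ref{le10}, and the upper bound from the set $\{\beta_P : P \in \mathcal{R}_m\}$ with one representative chosen so that $Y$ is a transversal of its kernel and its restriction to $Y$ is the transposition $(a_1\,a_2)$, followed by Lemma \ref{le7}. The only places you go beyond the paper are refinements rather than departures: you explicitly construct a partition $P^*\in\mathcal{R}_m$ having $Y$ as a transversal (the paper asserts this existence with a bare ``without loss of generality''), and you settle the case $m=2$ simply via $\S(Y)=\langle\eta|_Y\rangle$ instead of the paper's incidental claim that $\mathcal{R}_2=\emptyset$, a claim which is in fact false once $n\ge 4$ (e.g.\ the partition $\{\{1,3\},\,\{2\}\cup\{4,\ldots,n\}\}$ lies in $\mathcal{R}_2$) but is never actually needed.
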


\begin{proof}
If $m=1$ then $\T(X,Y)=\OP(X,Y)$, i.e. $\rank(\T(X,Y):\OP(X,Y))=0$. On the other hand, we have
$S(1,n)=n=\left(
\begin{array}{c}
n \\
1%
\end{array}%
\right) $. Suppose now that $n\geq 2$.
By Lemmas \ref{le9} and \ref{le10}, we obtain $\rank(\T(X,Y):\OP(X,Y))\geq \left\vert
\mathcal{R}_{m}\right\vert =S(m,n)-\left(
\begin{array}{c}
n \\
m%
\end{array}%
\right) $. In order to prove the equality, we have to find a relative
generating set $B$ of $\T(X,Y)$ modulo $\OP(X,Y)$ with $\left\vert
B \right\vert = |\mathcal{R}_{m}|$.
We observe that for each $P \in \mathcal{R}_{m}$, there
is $\beta_P \in \T(X,Y)$ with $\ker \beta_P = P$, which will be fixed.
Let $\mathcal{B} := \{\beta_P : P \in \mathcal{R}_{m}\}$.
If $m=2$ then $\mathcal{R}_{m}=\emptyset$ and $\S(Y)=\{\eta |_{Y},(\eta |_{Y})^{2}\} = \langle \eta |_{Y} \rangle$, obviously.
If $m\geq 3$ then without loss of generality we can assume that there is $P'\in \mathcal{R}_{m}$ such
that $Y$ is a transversal of $\ker \beta_{P'}$ and
$\beta_{P'}|_{Y}=\left(
\begin{array}{ccccc}
a_{1} & a_{2} & a_{3} & \cdots  & a_{m} \\
a_{2} & a_{1} & a_{3} & \cdots  & a_{m}%
\end{array}%
\right)$.
It is well known that $\S(Y)=\left\langle \beta_{P'}|_Y, \eta|_Y \right\rangle$.
Hence, $\mathcal{B}$ is a relative generating set of $\T(X,Y)$ modulo $\OP(X,Y)$ by Lemma \ref{le7}.
Since $|\mathcal{B}| = |\mathcal{R}_{m}|$, we obtain the required result.
\end{proof}

Now we will characterize the minimal relative generating sets of $\T(X,Y)$ modulo $\OP(X,Y)$. The minimal relative generating sets do not coincide with the relative generating sets of size $\rank(\T(X,Y):\OP(X,Y))$.

\begin{theorem}
Let $B\subseteq \T(X,Y)$. Then $B$ is a minimal relative generating set of $\T(X,Y)$ modulo $\OP(X,Y)$ if and only if there is a set $\widetilde{B}\subseteq B$ such that the following three statements are satisfied:\newline
$(i)$ $\mathcal{R}_{m} \subseteq \{\ker \beta :\beta \in B\setminus \widetilde{B}\}$,\newline
$(ii)$ $|B\setminus \widetilde{B}| = |\mathcal{R}_{m}|$, \newline
$(iii)$ $\S(Y) \subseteq \left\langle \{\beta|_{Y}:\beta \in B\}, \eta|_{Y} \right\rangle$ but
$\S(Y) \nsubseteq \left\langle \{\beta |_{Y}:\beta \in B\setminus \{\gamma\}\}, \eta|_{Y} \right\rangle$
for any $\gamma \in B$ with $\ker \gamma \in \{\ker \beta : \beta \in \widetilde{B}\}$.
\end{theorem}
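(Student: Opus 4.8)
The plan is to first record the generation criterion obtained by combining Lemmas \ref{le7}, \ref{le8}, and \ref{le9}: for any $C\subseteq\T(X,Y)\setminus\OP(X,Y)$ one has $\langle\OP(X,Y),C\rangle=\T(X,Y)$ if and only if both $\mathcal{R}_m\subseteq\{\ker\beta:\beta\in C\}$ and $\S(Y)\subseteq\langle\{\beta|_Y:\beta\in C\},\eta|_Y\rangle$ hold. I would then note, as a preliminary used in both directions, that the sets in question lie in $\T(X,Y)\setminus\OP(X,Y)$: by Lemma \ref{lenew} every $\beta\in\OP(X,Y)$ satisfies $\beta|_Y\in\langle\eta|_Y\rangle$ and has $\ker\beta\in\mathcal{Q}_m\cup\mathcal{P}_m$, hence contributes nothing to either part of the criterion. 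For a minimal generating set such an element would be redundant, contradicting minimality; for a set satisfying $(i)$--$(iii)$ it would lie in $\widetilde{B}$ (since $(i)$ and $(ii)$ force $B\setminus\widetilde{B}$ to realize exactly the kernels in $\mathcal{R}_m$), and then the second clause of $(iii)$ applied to it would be violated.

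For sufficiency, assume $(i)$--$(iii)$ for some $\widetilde{B}\subseteq B$. Condition $(i)$ and the first clause of $(iii)$ supply the two hypotheses of Lemma \ref{le7}, so $B$ generates. For minimality I would fix $\gamma\in B$ and split according to whether $\ker\gamma\in\{\ker\beta:\beta\in\widetilde{B}\}$. When it is, the second clause of $(iii)$ gives $\S(Y)\nsubseteq\langle\{\beta|_Y:\beta\in B\setminus\{\gamma\}\},\eta|_Y\rangle$, so $B\setminus\{\gamma\}$ fails the criterion. When it is not, then $\gamma\in B\setminus\widetilde{B}$; since $(i)$ and $(ii)$ make $\beta\mapsto\ker\beta$ a bijection of $B\setminus\widetilde{B}$ onto $\mathcal{R}_m$, and since no element of $\widetilde{B}$ realizes $\ker\gamma$, this kernel class of $\mathcal{R}_m$ is lost upon deleting $\gamma$, so again $B\setminus\{\gamma\}$ fails the criterion. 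As $\gamma$ was arbitrary, $B$ is minimal.

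For necessity, let $B$ be minimal; by the preliminary $B\subseteq\T(X,Y)\setminus\OP(X,Y)$, and Lemma \ref{le9} gives $\mathcal{R}_m\subseteq\{\ker\beta:\beta\in B\}$. I would choose a transversal $\overline{B}\subseteq B$ meeting each kernel class of $\mathcal{R}_m$ exactly once and set $\widetilde{B}:=B\setminus\overline{B}$; then $(i)$ and $(ii)$ are immediate and the first clause of $(iii)$ is Lemma \ref{le8}. For the second clause, take $\gamma\in B$ with $\ker\gamma\in\{\ker\beta:\beta\in\widetilde{B}\}$. The decisive point is that deleting such a $\gamma$ never breaks the kernel condition: if $\ker\gamma\in\mathcal{Q}_m\cup\mathcal{P}_m$ it is irrelevant to $\mathcal{R}_m$, while if $\ker\gamma\in\mathcal{R}_m$ then by hypothesis this class is realized both by an element of $\widetilde{B}$ and by the corresponding element of $\overline{B}$, so a witness always survives. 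Hence $\mathcal{R}_m\subseteq\{\ker\beta:\beta\in B\setminus\{\gamma\}\}$, and minimality together with the generation criterion forces the $\S(Y)$-clause to fail for $B\setminus\{\gamma\}$, which is exactly the second clause of $(iii)$.

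I expect the main obstacle to be this last step. In the analogous characterization of the previous section the kernel-type dichotomy $\mathcal{P}_m$ versus $\mathcal{Q}_m$ separated the kernel-covering and permutation-generating duties automatically, whereas here a single element with kernel in $\mathcal{R}_m$ may be responsible both for covering a kernel class and for supplying a permutation of $Y$. Choosing $\overline{B}$ only to discharge the kernel-covering obligations, and then verifying that the deletion of any kernel-duplicated or $\mathcal{Q}_m\cup\mathcal{P}_m$-type element leaves $\mathcal{R}_m$ covered, is what isolates the failure of generation in the $\S(Y)$-clause and thereby recovers $(iii)$.
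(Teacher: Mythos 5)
Your proposal is correct and takes essentially the same approach as the paper: it combines Lemmas \ref{le7}, \ref{le8} and \ref{le9} into a generation criterion, uses the same transversal construction ($\overline{B}$ covering each class of $\mathcal{R}_m$ once, $\widetilde{B}=B\setminus\overline{B}$) for necessity, and the same single-element-removal case analysis for minimality — indeed you explicitly justify the inclusion $\mathcal{R}_m\subseteq\{\ker\beta:\beta\in B\setminus\{\gamma\}\}$ that the paper merely asserts. The only blemish is the claim that ``every $\beta\in\OP(X,Y)$ satisfies $\beta|_Y\in\langle\eta|_Y\rangle$'': Lemma \ref{lenew} gives this only when $\beta|_Y\in\S(Y)$, but since any factorization of a permutation of the finite set $Y$ necessarily has all factors in $\S(Y)$, your intended conclusion (that $\OP$-elements contribute nothing toward generating $\S(Y)$ beyond $\eta|_Y$) still holds, and the paper is equally loose at the corresponding step.
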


\begin{proof}
Suppose that the conditions $(i) - (iii)$ are satisfied. Then
by Lemma \ref{le7} we have $\left\langle \OP(X,Y),B\right\rangle =\T(X,Y)$.
It remains to show that $B$ is minimal. Assume that there is $\gamma
\in B$ such that $\left\langle \OP(X,Y),B\setminus \{\gamma \}\right\rangle
=\T(X,Y)$. Note that $\alpha \beta |_{Y}=\alpha |_{Y}\beta |_{Y}$ for all
$\alpha ,\beta \in \T(X,Y)$. Hence, we can conclude that $\S(Y)\subseteq
\left\langle \left\{ \beta |_{Y}:\beta \in \T(X,Y)\right\} \right\rangle
\subseteq \left\langle \left\{ \beta |_{Y}:\beta \in \OP(X,Y)\cup (B\setminus
\{\gamma \})\right\} \right\rangle =\left\langle \left\{ \beta |_{Y}:\beta
\in B\setminus \{\gamma \}\right\} ,\eta |_{Y}\right\rangle $ by Lemma \ref{lenew}.
Hence, $\ker \gamma \notin \{\ker\beta :\beta \in \widetilde{B}\}$ by $(iii)$.
This implies that $\gamma \in B\setminus \widetilde{B}$ and
$|(B\setminus \widetilde{B})\setminus \{\gamma \}|<|\mathcal{R}_{m}|$ by $(ii)$,
i.e. $\mathcal{R}_{m}\nsubseteqq \{\ker \beta :\beta \in (B\setminus \widetilde{B})\setminus \{\gamma \}\}$.
Since $\ker\gamma \notin \{\ker \beta :\beta \in \widetilde{B}\}$, we have
$\mathcal{R}_{m}\nsubseteqq \{\ker \beta :\beta \in (B\setminus \{\gamma \})\}$ and by
Lemma \ref{le9}, we obtain that $\left\langle \OP(X,Y),B\setminus \{\gamma
\}\right\rangle \neq \T(X,Y)$, a contradiction. This shows that $B$ is a
minimal relative generating set of $\T(X,Y)$ modulo $\OP(X,Y)$.

Conversely, let $B$ be a minimal relative generating set of $\T(X,Y)$ modulo
$\OP(X,Y)$. We have $\mathcal{R}_{m}\subseteq
\{\ker \beta :\beta \in B\}$ and $\S(Y)\subseteq \left\langle \{\beta
|_{Y}:\beta \in B\},\eta |_{Y}\right\rangle $ by Lemma \ref{le9} and Lemma \ref{le8},
respectively. Then there exists a set  $\widetilde{B}\subseteq B$ with
$|B\setminus \widetilde{B}| = |\mathcal{R}_{m}|$
and $\mathcal{R}_{m}\subseteq \{\ker \beta :\beta \in (B\setminus \widetilde{B})\}$.
For the set $\widetilde{B}$, the conditions $(i)$ and $(ii)$ are satisfied. Assume
now that there is $\gamma \in B$ with $\ker \gamma \in \{\ker \beta :\beta
\in \widetilde{B}\}$ such that $\S(Y)\subseteq \left\langle \{\beta
|_{Y}:\beta \in B\setminus \{\gamma\}\},\eta |_{Y}\right\rangle$. Then
because of $\mathcal{R}_{m}\subseteq \{\ker \beta :\beta \in (B\setminus \{\gamma\})\}$,
the set $B\setminus \{\gamma)$ is a relative generating set of
$\T(X,Y)$ modulo $\OP(X,Y)$ by Lemma \ref{le7}. This contradicts the minimality of $B$.
Consequently, $(iii)$ is satisfied.
\end{proof}
~~\\

In particular, for the relative generating sets of minimal size we have

\begin{remark}
$B\subseteq \T(X,Y)$ is a relative generating set of $\T(X,Y)$ modulo $\OP(X,Y)$ of minimal size if and only if
$\widetilde{B}=\emptyset$.
\end{remark}

\end{document}